\renewcommand{\Re}{\operatorname{Re}}
\renewcommand{\Im}{\operatorname{Im}}
\newcommand{\ci}{\mathrm{i}}
\newcommand{\ce}{\mathrm{e}}
\newcommand{\cd}{\mathrm{d}}
\DeclarePairedDelimiter{\abs}{\lvert}{\rvert} % | | absolute value
\DeclarePairedDelimiter{\rbra}{(}{)} % () round brackets or parentheses
\DeclarePairedDelimiter{\cbra}{\{}{\}} % {} curly brackets or braces
\DeclarePairedDelimiter{\sbra}{[}{]} % [] square brackets or brackets
\numberwithin{equation}{section}
\theoremstyle{plain}
\newtheorem{Thm}{Theorem}[section]
\newtheorem{Lem}[Thm]{Lemma}
\newtheorem{Prop}[Thm]{Proposition}
\theoremstyle{definition}
\theoremstyle{remark}
\newtheorem{Eg}[Thm]{Example}
\theoremstyle{plain}
\newtheorem*{Thm*}{Theorem}
\newtheorem*{Lem*}{Lemma}
\newtheorem*{Prop*}{Proposition}
\newtheorem*{Cor*}{Corollary}
\newtheorem*{Conj*}{Conjecture}
\theoremstyle{definition}
\newtheorem*{Ass*}{Assumption}
\newtheorem*{Def*}{Definition}
\newtheorem*{Rem*}{Remark}
\theoremstyle{remark}
\newtheorem*{Eg*}{Example}
\titleformat*{\section}{\large\bfseries}
\title{ The asymptotic behavior of the renormalized zero resolvent 
		of L\'{e}vy processes under regular variation conditions }
\author{ Kouji Yano and Mingdong Zhao }
\date{}
\begin{document}
\maketitle

\begin{abstract}
As an analogue to the explicit formula in the stable case, the asymptotic behavior at the origin of the renormalized zero resolvent
of one-dimensional L\'{e}vy processes is studied under certain regular variation conditions 
on the L\'{e}vy-Khinchin exponent and the L\'{e}vy measure.
\end{abstract}

\section{Introduction} \label{sec1}
Let $ \rbra*{X , \mathbb{P}} $ denote the canonical representation of a one-dimensional L\'{e}vy process starting at $0$. 
The L\'{e}vy process may be characterized by the L\'{e}vy-Khinchin exponent $\varPsi$:
\begin{align}
	\varPsi\rbra*{\lambda} \coloneqq - \log\rbra*{ \mathbb{E} \sbra*{ \ce^{\ci \lambda X_1} } }
	= a \lambda^2 + \ci b \lambda + \int_{\mathbb{R}} \rbra*{ 1 - \ce^{\ci \lambda x} + \ci \lambda x \cdot1_{ \{\abs*{x} < 1\} } } \nu \rbra*{ \cd x }, \quad
	\lambda \in \mathbb{R}, \notag 
\end{align}
where $a\ge0$ is the Gaussian coefficient, $b\in\mathbb{R}$ is a constant and $\nu$ is the L\'{e}vy measure (i.e. a 
measure on $\mathbb{R}$ satisfying $\int_{\mathbb{R}}\rbra*{x^2\land 1}\nu\rbra*{\cd x}<\infty$ and $\nu\rbra*{\{0\}}=0$). 
We define the exponents $\theta$ and $\omega$ as follows:
\begin{align}
\theta\rbra*{\lambda} &\coloneqq \Re\varPsi\rbra*{\lambda} = a\lambda^2 + \int_\mathbb{R} \rbra*{1 - \cos \lambda x} \nu\rbra*{\cd x} \quad(\lambda\in\mathbb{R}), \label{eq.theta} \\
\omega\rbra*{\lambda} &\coloneqq \Im\varPsi\rbra*{\lambda} = b\lambda + \int_\mathbb{R} \rbra*{\lambda x \cdot1_{\{\abs*{x} < 1\}} - \sin \lambda x} \nu\rbra*{\cd x} \quad(\lambda\in\mathbb{R}). \label{eq.omega} 
\end{align}
We consider the following Assumption~\ref{ass.A}:
\begin{enumerate}[label=\textbf{(A)}]
	\item \label{ass.A}
		For every $ q > 0 $, it holds that
		\begin{align}
			\int_0^{\infty} \frac{ 1 }{\abs*{ q + \varPsi \rbra*{ \lambda } } }  \, \cd \lambda < \infty \notag.
		\end{align}  
\end{enumerate}
Under Assumption~\ref{ass.A}, the $q$-resolvent density (denoted by $r_q$) exists for each $q>0$; see Proposition~\ref{prop2.1}. 
The renormalized zero resolvent \textit{h} is defined by 
\begin{align} 
h\rbra*{x} \coloneqq \lim_{q\rightarrow0+} \rbra*{r_q\rbra*{0} - r_q\rbra*{-x}} ,
\end{align} 
where the limit exists and is finite under Assumption~\ref{ass.A}; see Section~\ref{sec2}.
Moreover, it is known that $h$ has an integral representation under an assumption which is stronger than Assumption~\ref{ass.A}:
\begin{Thm} [Tsukada~\cite{Tsukada2018}] \label{thm1.1}
	Suppose that Assumption~\ref{ass.T} holds:
	\begin{enumerate}[label=\textbf{(T)}]
			\item \label{ass.T}
				Assumption~\ref{ass.A} holds as well as 
					\begin{align}
						\int_0^1 \abs*{\Im\rbra*{\frac{\lambda}{\varPsi\rbra*{\lambda}}}} \cd \lambda<\infty.\label{int.ass.T} 
					\end{align}
		\end{enumerate}
	Then the renormalized zero resolvent $h$ admits the following integral representation:
	\begin{align}
		h\rbra*{x}=\frac{1}{\pi} \int_0^\infty 
		\Re \rbra*{\frac{1-\ce^{\ci\lambda x}}{\varPsi\rbra*{\lambda}}} \cd \lambda ,\quad x\in\mathbb{R} .\label{eq.h}
		\end{align}
\end{Thm}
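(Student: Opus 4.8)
The plan is to start from the Fourier-analytic expression for the resolvent density and to reduce the theorem to an interchange of the limit $q\to0+$ with an integral. Under Assumption~\ref{ass.A} the map $\lambda\mapsto 1/\rbra*{q+\varPsi\rbra*{\lambda}}$ is integrable on $\mathbb{R}$, so Proposition~\ref{prop2.1} should supply the inversion formula $r_q\rbra*{y}=\frac{1}{2\pi}\int_{\mathbb{R}}\ce^{-\ci\lambda y}/\rbra*{q+\varPsi\rbra*{\lambda}}\,\cd\lambda$. Subtracting the values at $y=0$ and $y=-x$ and folding the negative half-line onto the positive one via $\varPsi\rbra*{-\lambda}=\overline{\varPsi\rbra*{\lambda}}$ (valid since $\theta$ in \eqref{eq.theta} is even and $\omega$ in \eqref{eq.omega} is odd) gives
\begin{align}
r_q\rbra*{0}-r_q\rbra*{-x}=\frac{1}{\pi}\int_0^\infty\Re\rbra*{\frac{1-\ce^{\ci\lambda x}}{q+\varPsi\rbra*{\lambda}}}\,\cd\lambda. \notag
\end{align}
Because the left-hand side tends to $h\rbra*{x}$ as $q\to0+$ by the discussion in Section~\ref{sec2}, and the integrands converge pointwise for every $\lambda$ with $\varPsi\rbra*{\lambda}\neq0$, everything reduces to producing a $q$-independent integrable majorant and invoking dominated convergence.

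For the tail I would argue on $\sbra*{1,\infty}$ that the numerator is bounded by $2$, and that $q,\theta\ge0$ force $\abs*{q+\varPsi\rbra*{\lambda}}\ge\abs*{\varPsi\rbra*{\lambda}}$, so the integrand is dominated by $2/\abs*{\varPsi\rbra*{\lambda}}$, which is integrable on $\sbra*{1,\infty}$ as a consequence of Assumption~\ref{ass.A}; dominated convergence then settles this range at once.

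The crux is the analysis near $\lambda=0$, and this is exactly where \eqref{int.ass.T} must be used. I would write $1-\ce^{\ci\lambda x}=-\ci\lambda x+\rho\rbra*{\lambda}$, where $\rho\rbra*{\lambda}=\rbra*{1-\cos\lambda x}-\ci\rbra*{\sin\lambda x-\lambda x}$ satisfies $\abs*{\rho\rbra*{\lambda}}\le C_x\lambda^2$ on $\rbra*{0,1}$. For the leading term a direct computation gives $\Re\rbra*{-\ci\lambda x/\rbra*{q+\varPsi}}=-\lambda x\,\omega\rbra*{\lambda}/\rbra*{\rbra*{q+\theta\rbra*{\lambda}}^2+\omega\rbra*{\lambda}^2}$, whose absolute value is at most $\abs*{x}\cdot\abs*{\lambda\omega\rbra*{\lambda}}/\rbra*{\theta\rbra*{\lambda}^2+\omega\rbra*{\lambda}^2}=\abs*{x}\cdot\abs*{\Im\rbra*{\lambda/\varPsi\rbra*{\lambda}}}$ uniformly in $q\ge0$, since enlarging $\theta$ to $q+\theta$ only increases the denominator; this is precisely the quantity whose integrability over $\rbra*{0,1}$ is granted by \eqref{int.ass.T}. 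The remainder contributes at most $C_x\lambda^2/\abs*{q+\varPsi}\le C_x\lambda^2/\theta\rbra*{\lambda}$, and non-degeneracy of the process makes $\theta\rbra*{\lambda}\gtrsim\lambda^2$ as $\lambda\to0$ (indeed $\liminf_{\lambda\to0}\theta\rbra*{\lambda}/\lambda^2\ge a+\tfrac12\int x^2\,\nu\rbra*{\cd x}>0$ by Fatou), so this piece stays bounded near $0$. Both majorants are $q$-independent and integrable on $\rbra*{0,1}$, so dominated convergence applies here too.

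Combining the two regions identifies $\lim_{q\to0+}\rbra*{r_q\rbra*{0}-r_q\rbra*{-x}}$ with the integral in \eqref{eq.h}, and the same bounds show that this integral converges absolutely, so \eqref{eq.h} is well defined. The hard part is exactly the leading imaginary term $-\ci\lambda x$ near the origin: its real contribution decays only like $\abs*{\Im\rbra*{\lambda/\varPsi}}$, which is \emph{not} controlled by Assumption~\ref{ass.A} alone because $1/\abs*{q+\varPsi}$ blows up like $1/\theta$ there as $q\to0+$. It is precisely \eqref{int.ass.T} that tames this term, which is why Assumption~\ref{ass.T} is strictly stronger than Assumption~\ref{ass.A}.
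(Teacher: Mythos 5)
You should first note that the paper itself contains no proof of Theorem~\ref{thm1.1}: it is quoted from Tsukada~\cite{Tsukada2018}, and the paper only records the ingredients (Proposition~\ref{prop2.1}, Lemma~\ref{lem2.2}, and the formula for $h_q$ in Section~\ref{sec2}). Measured against that toolkit, your skeleton is the correct and indeed the standard one: fold the inversion formula into $h_q(x)=\frac{1}{\pi}\int_0^\infty\Re\rbra*{\frac{1-\ce^{\ci\lambda x}}{q+\varPsi(\lambda)}}\cd\lambda$ using $\varPsi(-\lambda)=\overline{\varPsi(\lambda)}$, split off the drift term $-\ci\lambda x$ near the origin, and bound its real contribution $\frac{-\lambda x\,\omega(\lambda)}{(q+\theta(\lambda))^2+\omega(\lambda)^2}$ uniformly in $q\ge 0$ by $\abs*{x}\,\abs*{\Im\rbra*{\lambda/\varPsi(\lambda)}}$ via $(q+\theta)^2\ge\theta^2$. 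This is exactly where \eqref{int.ass.T} is spent, and your closing diagnosis of why Assumption~\ref{ass.T} must exceed Assumption~\ref{ass.A} is accurate.

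There is, however, a genuine gap in both of your remaining domination claims: you present them as pointwise consequences of Assumption~\ref{ass.A}, and they are not — they are the content of Lemma~\ref{lem2.2}, which you never invoke. On the tail, your own inequality $\abs*{q+\varPsi}\ge\abs*{\varPsi}$ gives $1/\abs*{q+\varPsi}\le 1/\abs*{\varPsi}$, so Assumption~\ref{ass.A} controls the \emph{smaller} function; "$\int_1^\infty\cd\lambda/\abs*{\varPsi(\lambda)}<\infty$ as a consequence of (A)" is a non sequitur as written, and for a generic measurable function it is false (one can have $1/\rbra*{q+\abs*{\varPsi}}$ integrable while $1/\abs*{\varPsi}$ is not, via narrow deep dips of $\abs*{\varPsi}$). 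Upgrading (A) to $\int_0^\infty\frac{\lambda^2\land1}{\abs*{\varPsi(\lambda)}}\cd\lambda<\infty$ uses structural properties of L\'evy exponents and is precisely Lemma~\ref{lem2.2} (Tsukada's Lemma 15.5); you must cite or reprove it. Near the origin the same issue recurs: boundedness of $\lambda^2/\theta(\lambda)$ "near $0$" does not yield a majorant integrable on all of $(0,1)$, since a priori $\theta$ could vanish at an interior point $\lambda_0$ (which happens iff $a=0$ and $\nu$ is supported on $\frac{2\pi}{\lambda_0}\mathbb{Z}$), making $C_x\lambda^2/\theta(\lambda)$ non-integrable there; moreover the positivity of your Fatou constant $a+\frac12\int x^2\,\nu(\cd x)$ is itself a consequence of (A) (pure drift violates (A)), not a free "non-degeneracy" hypothesis. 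One can patch all of this directly — zeros of $\theta$ at $\lambda_0\neq0$ force $\nu$ finite, hence $\theta$ bounded and $\omega$ at most linear, so the integral in (A) diverges at infinity — but the clean repair is again Lemma~\ref{lem2.2}: $C_x\lambda^2/\abs*{q+\varPsi(\lambda)}\le C_x\lambda^2/\abs*{\varPsi(\lambda)}$, which is integrable on $(0,1)$, and the fact that $\theta>0$ off the origin under (A) is also what justifies the pointwise a.e.\ convergence you invoke. With Lemma~\ref{lem2.2} inserted at these two spots, your argument is complete and coincides with the cited proof.
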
 
  
A typical example is the class of strictly stable L\'{e}vy process of index $\alpha\in\rbra*{1,2}$. 
The L\'{e}vy measure is given as
\begin{align}
	\nu(\cd x) = 
		\begin{cases} 
			K_+ \abs*{x}^{-\alpha-1} \cd x \quad \text{on $(0,\infty)$,} \\
			K_- \abs*{x}^{-\alpha-1} \cd x \quad \text{on $(-\infty,0)$, }
		\end{cases}
\end{align}
for some constants $ K_+, K_-\ge0$ with $ K_+ +K_- >0 $, and the exponents $\theta$ and $\omega$ are given by
\begin{align}
	\theta\rbra*{\lambda} =c_\theta\lambda^\alpha\quad\text{and}\quad\omega\rbra*{\lambda}=c_\omega\lambda^\alpha \quad
	(\lambda>0),
\end{align}
where the constants $c_\theta$ and $c_\omega$ are given by
\begin{align}
	c_\theta = \pi\rbra*{K_+ + K_-}C_{\alpha+1}, \quad c_\omega = -c_\theta \beta \tan\frac{\pi\alpha}{2}\quad 
	\text{with}\quad \beta = \frac{K_+ - K_-}{K_+ + K_-}.
\end{align}
Here the constant $C_\alpha$ is given (see, e.g.,~\cite[Proposition 7.1]{Y3}) as 
\begin{align}
	C_\alpha \coloneqq \frac{1}{\pi} \int_0^\infty \frac{1-\cos x}{x^\alpha} \cd x =
	\frac{1}{2\Gamma\rbra*{\alpha}\sin\frac{\pi \rbra*{\alpha - 1}}{2}}\quad \rbra{\alpha\in\rbra*{1,3}}, \label{Calpha}
\end{align}
where $\Gamma(\alpha)=\int_0^\infty x^{\alpha-1}\ce^{-x} \cd x\,(\alpha>0)$ denotes the Gamma function. 
We assume $1<\alpha<2$ so that Assumption~\ref{ass.T} holds. 
By Eq.$(5.2)$ of~\cite{Y2}, the corresponding renormalized zero resolvent $h$ is given by 
\begin{align}
	h(x) = \frac{1-\beta \text{sgn}(x)}{c_\theta\rbra*{1+\beta^2 \rbra*{\tan \frac{\pi \alpha}{2}}^2} 
	\cdot\rbra*{2 \Gamma(\alpha)\cdot \rbra*{-\cos\frac{\pi\alpha}{2}}}}\abs{x}^{\alpha-1}. 
\end{align}
This function can be expressed as
\begin{align}
	h(x) = 
		\begin{cases}
			c_+\abs*{x}^{\alpha-1}\quad (x\ge0) ,\\
			c_-\abs*{x}^{\alpha-1}\quad (x<0),
		\end{cases}
		\label{stable.h}
\end{align}
%(see~\cite[Section 5]{Y2} for the details), 
where $c_\pm$ are given by
\begin{align}
	c_\pm&= \frac{1\mp \beta}{{c_\theta}\rbra*{1+\beta^2 \rbra*{\tan \frac{\pi \alpha}{2}}^2}\cdot 
	\rbra*{2 \Gamma(\alpha)\cdot\rbra*{-\cos\frac{\pi\alpha}{2}}}}\notag\\
	&= \frac{c_\theta\mp c_\theta\beta}{{c_\theta}^2\rbra*{1+\beta^2 \rbra*{\tan \frac{\pi \alpha}{2}}^2}\cdot \rbra*{2 \Gamma(\alpha)
	\sin\frac{\pi(\alpha-1)}{2}}} \notag\\
	&=\frac{C_\alpha}{\rbra*{{c_\theta}^2+{c_\omega}^2}}\rbra*{c_\theta\pm c_\omega\cdot \cot\frac{\pi\alpha}{2}}.\label{eq.c+-}
\end{align}
%\begin{align} 
%	c_\pm=\frac{C_\alpha}{{c_\theta}^2+{c_\omega}^2}\rbra*{c_\theta\pm c_\omega\cot\frac{\pi\alpha}{2}}. \label{eq.c+-}
%\end{align}

As an analogue to the explicit formula~\eqref{stable.h}, we would like to obtain general results concerning the asymptotic behavior at the origin 
of the renormalized zero resolvent. Let us state our main theorems. For a slowly varying function $L(\lambda)$ and constants 
$c\in\mathbb{R}$ and $\alpha\in\mathbb{R}$, we say that $f(\lambda) \sim c \lambda^\alpha L(\lambda)$ 
when $\frac{f(\lambda)}{\lambda^\alpha L(\lambda)}\rightarrow c$, even if $c=0$. This convention is standard; see, 
e.g.,~\cite[Section 1.7.2]{RegVar}.
\begin{Thm} \label{thm1.2}
	Suppose that Assumption~\ref{ass.T} holds. Suppose, in addition, that there 
	exist a slowly varying function $L$ at $\infty$ and constants $\alpha \in \rbra*{1,2}$,  
	$ c_\theta>0 $ and $ c_\omega \in \mathbb{R}$ such that 
	\begin{align}
		\theta\rbra*{\lambda} \sim c_\theta \lambda^\alpha L\rbra*{\lambda}, 
		\quad\,\omega\rbra*{\lambda}\sim c_\omega\lambda^\alpha L\rbra*{\lambda}\quad \text{ as } \lambda\rightarrow\infty. \label{con.Psi}
	\end{align}
	Then 
	\begin{align}
		h\rbra*{x} \sim c_\pm \abs*{ x }^{\alpha - 1} L\rbra*{ \frac{1}{\abs*{ x }} }^{-1} \quad\text{ as } 
		x\rightarrow0\pm,
	\end{align}
	where the coefficients $c_+$ and $c_-$ are given as Eq.~\eqref{eq.c+-}.
\end{Thm}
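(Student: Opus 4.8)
The plan is to start from Tsukada's representation \eqref{eq.h} and turn the statement into a single scaling limit. Writing $1/\varPsi=(\theta-\ci\omega)/(\theta^2+\omega^2)$ and $1-\ce^{\ci\lambda x}=(1-\cos\lambda x)-\ci\sin\lambda x$, the real part in \eqref{eq.h} becomes explicit, so that for $x\neq0$
\[
	h(x)=\frac{1}{\pi}\int_0^\infty\frac{(1-\cos\lambda x)\,\theta(\lambda)-\sin(\lambda x)\,\omega(\lambda)}{\theta(\lambda)^2+\omega(\lambda)^2}\,\cd\lambda .
\]
Treating $x\to0+$ first, I would substitute $u=\lambda x$ and set $\tilde\theta(u,x)=x^\alpha\theta(u/x)/L(1/x)$ and $\tilde\omega(u,x)=x^\alpha\omega(u/x)/L(1/x)$. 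Every power of $x$ and every factor $L(1/x)$ then cancels between numerator and denominator, leaving
\[
	h(x)\,x^{1-\alpha}L(1/x)=\frac{1}{\pi}\int_0^\infty\frac{(1-\cos u)\,\tilde\theta(u,x)-\sin u\,\tilde\omega(u,x)}{\tilde\theta(u,x)^2+\tilde\omega(u,x)^2}\,\cd u .
\]
By \eqref{con.Psi} and the uniform convergence theorem for slowly varying functions, $\tilde\theta(u,x)\to c_\theta u^\alpha$ and $\tilde\omega(u,x)\to c_\omega u^\alpha$ for each fixed $u>0$, so the integrand converges pointwise to $((1-\cos u)c_\theta-\sin u\,c_\omega)/((c_\theta^2+c_\omega^2)u^\alpha)$.

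The heart of the argument is to pass the limit through the integral, the main tool being Potter's bounds applied to $\theta$, which is regularly varying of index $\alpha$ because $c_\theta>0$. For $u$ bounded away from $0$ both arguments $u/x$ and $1/x$ tend to $\infty$, and for any $\delta\in(0,\alpha-1)$ Potter's inequality gives, uniformly in small $x$, $\tilde\theta(u,x)\ge c\,u^{\alpha-\delta}$ for $u\ge1$ and $\tilde\theta(u,x)\ge c\,u^{\alpha+\delta}$ for $\epsilon\le u\le1$. Together with the Cauchy--Schwarz estimate
\[
	\frac{\abs{(1-\cos u)\,\tilde\theta-\sin u\,\tilde\omega}}{\tilde\theta^2+\tilde\omega^2}\le\frac{2\abs{\sin(u/2)}}{\sqrt{\tilde\theta^2+\tilde\omega^2}}\le\frac{2\abs{\sin(u/2)}}{\tilde\theta},
\]
this dominates the integrand on $[\epsilon,\infty)$ by a fixed integrable majorant, of order $u^{-(\alpha-\delta)}$ at infinity; dominated convergence then gives the convergence of $\int_\epsilon^\infty$ to the corresponding integral of the limit.

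It remains to control $\int_0^\epsilon$ uniformly in $x$, and I would split it at $u=Xx$, where $X$ is the Potter threshold. On $(Xx,\epsilon)$ the lower bound $\tilde\theta\ge c\,u^{\alpha+\delta}$ yields the majorant $2\abs{\sin(u/2)}/\tilde\theta\le c'u^{1-\alpha-\delta}$, whose integral over $(0,\epsilon)$ is $O(\epsilon^{2-\alpha-\delta})$ uniformly in $x$ (taking $\delta<2-\alpha$). On $(0,Xx)$, i.e. $\lambda\in(0,X)$, I undo the substitution; from $\abs{1-\ce^{\ci\lambda x}+\ci\lambda x}\le(\lambda x)^2/2$ one gets $\Re((1-\ce^{\ci\lambda x})/\varPsi)=x\,\Im(\lambda/\varPsi)+O((\lambda x)^2/\abs{\varPsi})$, so this piece contributes $x\int_0^X\abs{\Im(\lambda/\varPsi)}\,\cd\lambda+O(x^2)=O(x)$, the finiteness near the origin being precisely what Assumption~\ref{ass.T} provides. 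After multiplication by $x^{1-\alpha}L(1/x)$ this is $O(x^{2-\alpha}L(1/x))\to0$ since $\alpha<2$. Letting first $x\to0+$ and then $\epsilon\to0$ removes all error terms (the tail $\int_0^\epsilon$ of the limit integrand vanishes because that integrand is integrable at $0$).

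Finally I would evaluate the limit. By \eqref{Calpha}, $\int_0^\infty(1-\cos u)u^{-\alpha}\,\cd u=\pi C_\alpha$, while the companion Mellin integral gives $\int_0^\infty(\sin u)u^{-\alpha}\,\cd u=\pi/(2\Gamma(\alpha)\sin\tfrac{\pi\alpha}{2})$ for $\alpha\in(1,2)$; the reflection formula recasts this as $-\pi C_\alpha\cot\tfrac{\pi\alpha}{2}$. Hence
\[
	\frac{1}{\pi(c_\theta^2+c_\omega^2)}\Big(c_\theta\,\pi C_\alpha-c_\omega\int_0^\infty\frac{\sin u}{u^\alpha}\,\cd u\Big)=\frac{C_\alpha}{c_\theta^2+c_\omega^2}\Big(c_\theta+c_\omega\cot\frac{\pi\alpha}{2}\Big)=c_+,
\]
matching \eqref{eq.c+-}. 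The case $x\to0-$ is identical except that $\sin(\lambda x)$ changes sign under $u=\lambda\abs{x}$, which flips the sign of the $c_\omega$-term and produces $c_-$. The main obstacle throughout is the uniform integrable domination: making Potter's bounds interact cleanly with the oscillatory numerator across the three regimes $u\downarrow0$, $u$ bounded, and $u\to\infty$, and in particular isolating the bounded-$\lambda$ contribution, where only Assumption~\ref{ass.T}---not mere regular variation---controls the integral near the origin.
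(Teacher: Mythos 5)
Your proposal is correct and follows essentially the same route as the paper's proof: the same splitting into a low-frequency piece killed by Assumption~\ref{ass.T} together with $\int_0^r \lambda^2/\abs*{\varPsi(\lambda)}\,\cd\lambda<\infty$ (Lemma~\ref{lem2.2}), and a high-frequency piece treated by the scaling substitution, Potter's theorem, and dominated convergence, with the limit evaluated through the Mellin integrals behind Eq.~\eqref{Calpha}. The only differences are organizational: your Cauchy--Schwarz bound $2\abs*{\sin(u/2)}/\tilde\theta$ replaces the paper's two separate majorants for the $\theta$- and $\omega$-terms (so you need Potter bounds only for $\theta$, using $c_\theta>0$), and your $\epsilon$-cut with a double limit replaces the paper's single fixed cutoff $r$.
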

Theorem~\ref{thm1.2} will be proved in Section~\ref{sec3}. Note that the condition~\eqref{con.Psi} implies Assumption~\ref{ass.A} since
\begin{align}
	\abs*{\varPsi\rbra*{\lambda}} \sim \sqrt{{c_\theta}^2+{c_\omega}^2} \lambda^\alpha L\rbra*{\lambda} \quad 
	(\lambda\rightarrow\infty). \label{valid.ass.A}
\end{align}

The next result gives a sufficient condition for the regular variation 
conditions~\eqref{con.Psi} of the exponents $\theta$ and $\omega$.
\begin{Thm} \label{thm1.3}
	Suppose that the L\'{e}vy measure $\nu$ has a density: 
	$ \nu\rbra*{\cd x} = \xi\rbra*{x} \cd x $.
	Write $ \xi_\pm\rbra*{x} \coloneqq \xi\rbra*{\pm x} ,\, x\ge0 $. 
			Suppose, in addition, that the Gaussian coefficient $a=0$ and 
			that there exist constant $ \alpha \in \rbra*{1,2} $ and slowly varying functions $K_+$ and $K_-$ at $0$
			such that 
			\begin{align}
				k \coloneqq \lim_{x\rightarrow0+} \frac{K_-\rbra*{x}}{K_+\rbra*{x}} \in [0,\infty]
			\end{align}
			and
			\begin{align}
				\xi_\pm\rbra*{x}\sim x^{-\alpha-1}K_\pm\rbra*{x}\quad\text{ as }x\rightarrow0+.
			\end{align}
			Then
			\begin{align}
				\theta\rbra*{\lambda} \sim c_\theta \lambda^\alpha L\rbra*{\lambda},
				\quad \, \omega\rbra*{\lambda} \sim c_\omega \lambda^\alpha L\rbra*{\lambda} \quad \text{ as } 
				\lambda\rightarrow\infty, \label{eq.asym.Psi}
			\end{align}
			where the coefficients $ c_\theta$ and $c_\omega$ are given as 
			\begin{align} 
				c_\theta = \pi C_{\alpha+1}, \quad 
				c_\omega =\frac{1-k}{1+k} \cdot \frac{\pi C_\alpha}{\alpha},
			\end{align} 
			and $L$ is the slowly varying function at $\infty$ given as 
			\begin{align}
				L\rbra*{\lambda} = K_+\rbra*{\lambda^{-1}} + K_-\rbra*{\lambda^{-1}} \underset{\lambda\to\infty}{\sim} 
				\begin{cases}
					\rbra*{1+k} K_+\rbra*{\lambda^{-1}} \quad (k\in[0,\infty)),\\
					K_-\rbra*{\lambda^{-1}} \quad (k=\infty).
				\end{cases}
			\end{align}		
\end{Thm}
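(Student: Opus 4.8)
The plan is to fold each exponent onto the positive half-line, rescale by $y=\lambda x$, and pass to the limit $\lambda\to\infty$ by dominated convergence, controlling the slowly varying factors by Potter's bound. Since $a=0$, $\cos$ is even and $\sin$ is odd, folding the negative half-line gives
\begin{align*}
	\theta\rbra*{\lambda} &= \int_0^\infty \rbra*{1-\cos\lambda x}\rbra*{\xi_+\rbra*{x}+\xi_-\rbra*{x}}\,\cd x, \\
	\omega\rbra*{\lambda} &= b\lambda + \int_0^\infty \rbra*{\lambda x\,1_{\{x<1\}}-\sin\lambda x}\rbra*{\xi_+\rbra*{x}-\xi_-\rbra*{x}}\,\cd x .
\end{align*}
Write $U\coloneqq K_++K_-$, so that $L\rbra*{\lambda}=U\rbra*{\lambda^{-1}}$. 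In $\omega$ the drift term satisfies $b\lambda=o\rbra*{\lambda^\alpha L\rbra*{\lambda}}$ because $\alpha>1$, and the tail $\int_1^\infty \sin\rbra*{\lambda x}\rbra*{\xi_+\rbra*{x}-\xi_-\rbra*{x}}\,\cd x\to0$ by the Riemann--Lebesgue lemma, the density being integrable on $\cbra*{\abs{x}>1}$; both are therefore negligible against $\lambda^\alpha L\rbra*{\lambda}$.

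For $\theta$ I fix a small $x_0>0$ and split $\int_0^\infty=\int_0^{x_0}+\int_{x_0}^\infty$. The outer piece is bounded by $2\nu\rbra*{\cbra*{\abs{x}>x_0}}<\infty$, hence $o\rbra*{\lambda^\alpha L\rbra*{\lambda}}$. On $\rbra*{0,x_0}$ I substitute $x=y/\lambda$, use $\xi_+\rbra*{x}+\xi_-\rbra*{x}=x^{-\alpha-1}U\rbra*{x}\rbra*{1+o\rbra*{1}}$, and divide by $\lambda^\alpha L\rbra*{\lambda}=\lambda^\alpha U\rbra*{1/\lambda}$ to obtain
\begin{align*}
	\frac{\theta\rbra*{\lambda}}{\lambda^\alpha L\rbra*{\lambda}}
	=\int_0^{\lambda x_0}\rbra*{1-\cos y}\,y^{-\alpha-1}\,\frac{U\rbra*{y/\lambda}}{U\rbra*{1/\lambda}}\,\rbra*{1+o\rbra*{1}}\,\cd y+o\rbra*{1}.
\end{align*}
By slow variation the integrand converges pointwise to $\rbra*{1-\cos y}y^{-\alpha-1}$, whose integral over $\rbra*{0,\infty}$ equals $\pi C_{\alpha+1}=c_\theta$. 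The treatment of $\omega$ is identical after the analogous splitting of $\int_0^1$ (the piece over $\sbra*{x_0,1}$ being $O\rbra*{\lambda}$, hence negligible) and after replacing the weight $\rbra*{1-\cos y}y^{-\alpha-1}$ by $\rbra*{y-\sin y}y^{-\alpha-1}$; one integration by parts gives $\int_0^\infty \rbra*{y-\sin y}y^{-\alpha-1}\,\cd y=\frac1\alpha\int_0^\infty\rbra*{1-\cos y}y^{-\alpha}\,\cd y=\frac{\pi C_\alpha}{\alpha}$. The factor $\frac{1-k}{1+k}$ appears because, by slow variation, $\frac{\rbra*{K_+-K_-}\rbra*{y/\lambda}}{L\rbra*{\lambda}}\to\frac{1}{1+k}-\frac{k}{1+k}=\frac{1-k}{1+k}$ for each fixed $y$, the normalization by $L$ handling all $k\in\sbra*{0,\infty}$ at once, including the vanishing of the $+$ or $-$ contribution when $k=\infty$ or $k=0$.

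The \emph{main obstacle} is justifying dominated convergence on the expanding domain $\rbra*{0,\lambda x_0}$, that is, dominating the slowly varying ratio uniformly in $y$. Here I invoke the uniform convergence theorem (Potter's bound): for every $\delta>0$ there is $A>0$ with $\frac{U\rbra*{y/\lambda}}{U\rbra*{1/\lambda}}\le A\max\rbra*{y^\delta,y^{-\delta}}$ once $\lambda$ is large and $y<\lambda x_0$, and the same bound dominates $\frac{\abs{\rbra*{K_+-K_-}\rbra*{y/\lambda}}}{L\rbra*{\lambda}}$ since $\abs{K_+-K_-}\le U$. The dominating function $\rbra*{1-\cos y}y^{-\alpha-1}\max\rbra*{y^\delta,y^{-\delta}}$ is integrable at $0$ only when $\delta<2-\alpha$ (this is where $\alpha<2$ enters) and at $\infty$ when $\delta<\alpha$; for $\omega$ the weight $\rbra*{y-\sin y}y^{-\alpha-1}$ is integrable at $\infty$ only when $\delta<\alpha-1$ (this is where $\alpha>1$ enters). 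Thus a single small $\delta$ works precisely because $\alpha\in\rbra*{1,2}$, and the residual $o\rbra*{1}$ from the regular-variation hypothesis, bounded on $\rbra*{0,x_0}$, is absorbed by the same dominating function. Assembling the two limits and recalling $L\rbra*{\lambda}=K_+\rbra*{\lambda^{-1}}+K_-\rbra*{\lambda^{-1}}$, with $L\sim\rbra*{1+k}K_+\rbra*{\lambda^{-1}}$ or $L\sim K_-\rbra*{\lambda^{-1}}$ according as $k<\infty$ or $k=\infty$, yields \eqref{eq.asym.Psi} with the stated constants.
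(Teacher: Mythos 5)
Your proof is correct and follows essentially the same route as the paper's: split the integrals at a small threshold, rescale by $y=\lambda x$, and pass to the limit by dominated convergence with Potter's bound, using the same window $\delta\in\bigl(0,(\alpha-1)\wedge(2-\alpha)\bigr)$ and evaluating the limit integrals $\pi C_{\alpha+1}$ and $\pi C_\alpha/\alpha$ by the same integration by parts. The only cosmetic differences are that you fold the two half-lines and normalize both signs by $U=K_++K_-$ in a single pass (which handles all $k\in[0,\infty]$ at once, absorbing the $o(1)$ error from the regular-variation hypothesis into the same dominating function), and that you invoke Riemann--Lebesgue for the tail of $\omega$ where the paper's trivial bound $(\lambda+1)\,\nu([-r,r]^c)=o(\lambda^\alpha L(\lambda))$ suffices, whereas the paper treats $\xi_+$ and $\xi_-$ separately, each normalized by its own $K_\pm(\lambda^{-1})$, and combines the two limits via $k$ only at the end.
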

Theorem~\ref{thm1.3} will be proved in Section~\ref{sec4}. 
Note that Theorem~\ref{thm1.3} generalizes some results of Grzywny--Le\.{z}aj--Mi\'{s}ta~\cite[Theorem 3.4]{GLM}, where they assumed that the negative part of the L\'{e}vy measure only differs from the positive part up to the constant multiple. 
%Our results Theorem~\ref{thm1.3} and Theorem~\ref{thm6.1} studied the regular variation of $\theta$ and $\omega$, which has been studied in Grzywny-Le\.{z}aj-Mi\'{s}ta~\cite{GLM}. 
%In~\cite{GLM}, they proved that the regular variation of $\theta$ at$\infty$ (resp. at $0$) with index $\alpha\in(0,2)$ is equivalent to the regular variation of the function 
%$t\mapsto\nu\rbra*{(-t,t)^c}$ at $0$ (resp. at $\infty$) with index $-\alpha$. They also proved the regular variation of $\omega$ at $0$ with index $\alpha\in(1,2)$ when the L\'{e}vy measure has the form $c_- 1_{\sbra*{x<0}}\nu_0(\cd x) + c_+ 1_{\sbra*{x>0}}\nu_0(\cd x)$ for some symmetric L\'{e}vy measure $\nu_0$ and $\theta$ is varying regularly at $0$ with index 
%$\alpha$. The difference from~\cite{GLM} is that we place emphasis on the asymmetric case and they do not care with the regular variation of $\omega$ at $\infty$. 
\begin{Eg}\label{eg1.4}
Suppose that the Gaussian coefficient $a=0$ and the L\'{e}vy measure density is given by
\begin{align}
	\xi_\pm\rbra*{x}=K_\pm x^{-\alpha-1}(1+x)^{\alpha-\beta}\quad(x>0), \label{ex1.4.setting}
\end{align}
where $K_\pm>0$, ${\beta}\in(0,2)$ and $1<\alpha<2$ are constants. 
Then it is easy to check that $\xi_\pm\rbra*{x}$ satisfies the assumptions 
of Theorem~\ref{thm1.3} with $k=\frac{K_-}{K_+}$, since $(1+x)^{\alpha-\beta}$ is slowly varying at $0$. 
Hence we obtain Eq.~\eqref{eq.asym.Psi} with 
\begin{align}
	\quad L\equiv K_++K_-, \quad
	c_\theta=\pi C_{\alpha+1}\quad \text{and}\quad
	c_\omega=\frac{K_+-K_-}{K_++K_-} \cdot \frac{\pi C_{\alpha}}{\alpha}.\label{ex.c_Psi}
\end{align}
We see that Assumption~\ref{ass.A} holds since
\begin{align}
	\abs*{\varPsi(\lambda)}\sim\sqrt{{c_\theta}^2+{c_\omega}^2}\cdot\rbra*{K_++K_-}\lambda^{\alpha}  \quad
	(\lambda\rightarrow\infty). \label{ex.Psi1}
\end{align} 
We also see that Assumption~\ref{ass.T} holds since, by Eq.~\eqref{ex1.4.setting}, we have 
\begin{align}
	\xi_\pm\rbra*{x} \sim K_\pm x^{-{\beta}-1}\quad\text{as $x\rightarrow\infty$}
\end{align}
and, by~\ref{thm6.1.i} of Theorem~\ref{thm6.1}, we have 
\begin{align}
	\theta(\lambda) \sim \pi C_{\beta+1}(K_++K_-)\lambda^\beta\quad\text{as $\lambda\downarrow0$} ,
\end{align} 
which implies 
\begin{align}
	\int_0^1 \abs*{\Im\rbra*{\frac{\lambda}{\varPsi(\lambda)}}} \cd \lambda
	\le \int_0^1 \frac{\lambda}{\abs*{\varPsi(\lambda)}} \cd \lambda
	\le \int_0^1 \frac{\lambda}{\theta(\lambda)} \cd \lambda<\infty.
\end{align}
Hence, the renormalized zero resolvent $h$ exists and can be expressed as Eq.~\eqref{eq.h} by Theorem~\ref{thm1.1}. 
Moreover, by Theorem~\ref{thm1.2}, it holds that
\begin{align}
	h(x)\sim c_\pm \abs{x}^{\alpha-1}\quad(x\rightarrow0\pm),
\end{align}
where $c_+$ and $c_-$ are given as
\begin{align} 
	c_\pm=\frac{\rbra*{K_++K_-}C_{\alpha}}{{c_\theta}^2+{c_\omega}^2}
	\rbra*{c_\theta\pm c_\omega\cot\frac{\pi\alpha}{2}},
\end{align} 
with $c_\theta$ and $c_\omega$ being given as~\eqref{ex.c_Psi}.
\end{Eg}

\subsection{Backgrounds of the renormalized zero resolvent}
The renormalized zero resolvent often appears in the penalization problems: 
a kind of long-time limit problems, which can be depicted as follows: 
for a given parametrized family $ \tau = \{\tau_\lambda\} $ of random times, which is called a \emph{clock}, and a given 
positive weight process $\rbra*{\Gamma_t,\,t\ge0}$, the penalization problem is to figure out the probability measure 
$\mathbb{Q}$ satisfying 
\begin{align}
	\mathbb{E}^\mathbb{Q}[F_t]=\lim_{\tau\rightarrow \infty}\frac{\mathbb{E}[F_t \cdot \Gamma_\tau]}{\mathbb{E}[\Gamma_\tau]}
	\label{eq.01}
\end{align}
for all $t\ge0$ and all bounded functional $F_t$ adapted to a given filtration. 

Roynette--Vallois--Yor (see~\cite{R-V-Y1},~\cite{R-V-Y2} and~\cite{R-V-Y3}) have studied the penalization problem for the 
one-dimensional standard Brownian motion $ \rbra*{B_t,\,t\ge0} $.
When we take the constant clock (i.e. $ \tau_\lambda = \lambda $) and $\Gamma_t = f \rbra*{L_t}$ for the local time $\rbra*{L_t,\,t\ge0}$ of the origin and a given positive 
integrable function $f$ on $[0,\infty)$, the limit measure $\mathbb{Q}$ is characterized by
\begin{align}
	\mathbb{E}^\mathbb{Q}[F_t] = \mathbb{E}\sbra*{\frac{M_t}{M_0}F_t} , \label{penalization.Q}
\end{align}
where $ \rbra*{M_t,t\ge0} $ is a martingale given by
\begin{align}
	M_t = f\rbra*{L_t} \abs*{B_t} + \int_0^\infty f\rbra*{L_t +u} \cd u ,\quad t\ge0 .\label{Penal.M_t}
\end{align}
Yano--Yano--Yor~\cite{Y4} generalized the local time penalization problem to the symmetric stable processes $\rbra*{X_t,\,t\ge0}$, 
where the formula~\eqref{Penal.M_t} is replaced by
\begin{align}
	M_t = f\rbra*{L_t} h\rbra*{X_t} + \int_0^\infty f\rbra*{L_t +u} \cd u ,\quad t\ge0 .\notag
\end{align}
Takeda--Yano~\cite{T-Y2023} generalized this result to the asymmetric L\'{e}vy processes with certain random clocks.

Let us explain the earlier studies of the existence of the renormalized zero resolvent.
For symmetric L\'{e}vy processes, Salminen-Yor~\cite{S-Y} showed it under Assumption~\ref{ass.A}; 
see also Yano--Yano--Yor~\cite{Y3} and Yano~\cite{Y1}. 
For asymmetric L\'{e}vy process, Yano~\cite{Y2} 
proved it under some technical assumptions, and Pant\'{i}~\cite{Pan} and Tsukada~\cite{Tsukada2018} gave some generalization. 
Recently, Takeda--Yano~\cite{T-Y2023} showed it under Assumption~\ref{ass.A}.

\subsection{Organization}
{}The remainder of this paper is organized as follows. 
In Section~\ref{sec2}, we recall several properties of L\'{e}vy processes and the renormalized zero resolvents.  
In Section~\ref{sec3}, we prove Theorem~\ref{thm1.2}. 
In Section~\ref{sec4}, we prove Theorem~\ref{thm1.3}.
In Section~\ref{sec5}, we study similar problems when the Gaussian coefficient is positive. 
In Section~\ref{sec6} as an appendix, we state similar results where the roles of $0$ and $\infty$ are switched.

\subsection{Acknowledgements}
The author would like to thank Kohki Iba for his encouragement and helpful comments. 
This research was supported by ISM 2025-ISMCRP-5007. 
The research of K. Yano was supported by JSPS KAKENHI grant 
no.'s JP24K06781, JP24K00526 and JP21H01002 and by JSPS Open Partnership Joint Research Projects grant no. JPJSBP120249936.

\section{Preliminaries} \label{sec2}
{}Denote by $\mathcal{L}^1(\mu)$ the class of integrable functions on $\mathbb{R}$ with respect to a measure $\mu$ 
on $\mathbb{R}$. When $\mu$ is the Lebesgue measure, we will abbreviate $\mathcal{L}^1(\mu)$ as $\mathcal{L}^1$.

The $q$-resolvent operator $U_q$ for $q>0$ is defined by 
\begin{align}
	U_qf\rbra*{x}\coloneqq\mathbb{E}\sbra*{\int_0^\infty e^{-qt}f\rbra*{X_t+x} \cd t} ,
\end{align}
for all non-negative measurable function $f$. 
The following result is well-known (see, e.g.,~\cite[Corollary 15.1]{Tsukada2018}): 
\begin{Prop}\label{prop2.1}
	Suppose that Assumption~\ref{ass.A} holds. Then, for all $q>0$, the $q$-resolvent density $r_q$, which is characterized by 
	\begin{align}
		U_qf\rbra{x}=\int_\mathbb{R}f(y)r_q(y-x)\,\cd y
	\end{align} 
	for all non-negative measurable function $f$, exists and a version of it is given as follows: 
	\begin{align}
		r_q (x) =\frac{1}{2\pi} \int_{\mathbb{R}} \frac{e^{-\ci \lambda x}}{q + \varPsi\rbra*{\lambda}} \cd \lambda
			= \frac{1}{\pi} \int_0^\infty \Re\rbra*{ \frac{e^{-\ci \lambda x}}{q + \varPsi\rbra*{\lambda}} } \cd \lambda 
			\quad(q>0,\,x\in\mathbb{R}).
	\end{align}
	Consequently, $r_q$ is continuous and vanishes as $\abs{x}\rightarrow\infty$.
\end{Prop}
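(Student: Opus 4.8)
The plan is to prove the formula by Fourier analysis of the $q$-potential measure. First I would write the transition law $\mu_t$ of $X_t$, whose characteristic function is $\int_{\mathbb{R}}\ce^{\ci\lambda y}\mu_t\rbra*{\cd y}=\ce^{-t\varPsi\rbra*{\lambda}}$ by the L\'evy--Khinchin representation, and rewrite the resolvent operator as $U_qf\rbra*{x}=\int_{\mathbb{R}}f\rbra*{z+x}\,V_q\rbra*{\cd z}$, where $V_q\rbra*{\cd z}\coloneqq\int_0^\infty\ce^{-qt}\mu_t\rbra*{\cd z}\,\cd t$ is the $q$-potential measure. The change of variable $y=z+x$ turns this into $U_qf\rbra*{x}=\int_{\mathbb{R}}f\rbra*{y}\,V_q\rbra*{\cd\rbra*{y-x}}$, so that the whole statement reduces to showing that $V_q$ is absolutely continuous with the asserted density: if $V_q\rbra*{\cd z}=r_q\rbra*{z}\,\cd z$ then $V_q\rbra*{\cd\rbra*{y-x}}=r_q\rbra*{y-x}\,\cd y$, giving both the characterization and the explicit version.

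Second, I would compute the characteristic function of $V_q$. Because $\Re\rbra*{q+\varPsi\rbra*{\lambda}}=q+\theta\rbra*{\lambda}\ge q>0$, Fubini (licensed by $\int_0^\infty\ce^{-qt}\,\cd t=1/q<\infty$ together with $\abs*{\ce^{\ci\lambda y}}=1$) yields $\int_{\mathbb{R}}\ce^{\ci\lambda y}V_q\rbra*{\cd y}=\int_0^\infty\ce^{-qt}\ce^{-t\varPsi\rbra*{\lambda}}\,\cd t=1/\rbra*{q+\varPsi\rbra*{\lambda}}$. Assumption~\ref{ass.A}, combined with the symmetry $\varPsi\rbra*{-\lambda}=\overline{\varPsi\rbra*{\lambda}}$ (which holds since $\theta$ is even and $\omega$ is odd, and which gives $\abs*{q+\varPsi\rbra*{-\lambda}}=\abs*{q+\varPsi\rbra*{\lambda}}$), shows that $\lambda\mapsto1/\rbra*{q+\varPsi\rbra*{\lambda}}$ lies in $\mathcal{L}^1\rbra*{\mathbb{R}}$. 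Hence Fourier inversion applies: $V_q$ has a bounded continuous density $r_q\rbra*{x}=\frac{1}{2\pi}\int_{\mathbb{R}}\frac{\ce^{-\ci\lambda x}}{q+\varPsi\rbra*{\lambda}}\,\cd\lambda$, and by the Riemann--Lebesgue lemma $r_q$ vanishes as $\abs*{x}\to\infty$. The same symmetry pairs the contributions of $\lambda$ and $-\lambda$, since the integrand at $-\lambda$ equals the complex conjugate of the integrand at $\lambda$, which folds the integral over $\mathbb{R}$ into $\frac{1}{\pi}\int_0^\infty\Re\rbra*{\cdots}\cd\lambda$ and produces the second expression.

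The one point that must be justified with care is the two uses of Fubini's theorem — once to identify the characteristic function as $1/\rbra*{q+\varPsi}$, and once (implicitly, through Fourier inversion) to interchange the $\cd t$-integral with the Fourier integral — both of which rest entirely on the uniform bound $\abs*{\ce^{-qt}\ce^{-t\varPsi\rbra*{\lambda}}}=\ce^{-t\rbra*{q+\theta\rbra*{\lambda}}}\le\ce^{-qt}$ in the time variable and on the $\mathcal{L}^1$-integrability in $\lambda$ supplied by Assumption~\ref{ass.A}; the strict positivity $q+\theta\rbra*{\lambda}\ge q$ is exactly what keeps the denominator away from zero and makes the elementary time integral converge. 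An alternative route that sidesteps the potential measure is to test the identity against functions $f$ with $\widehat f\in\mathcal{L}^1$: representing $f$ by Fourier inversion, interchanging the integrals, and carrying out the $\cd t$-integration gives $U_qf\rbra*{x}=\int_{\mathbb{R}}f\rbra*{y}r_q\rbra*{y-x}\,\cd y$ directly for such $f$, after which a monotone-class argument extends the identity to every non-negative measurable $f$ and simultaneously identifies $r_q$ as the resolvent density.
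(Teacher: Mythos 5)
Your proof is correct and takes essentially the same approach as the source the paper relies on: the paper does not prove Proposition~\ref{prop2.1} itself but cites Tsukada~\cite{Tsukada2018}, and the standard argument there is the one you give --- identify the characteristic function of the $q$-potential measure $V_q$ as $1/\rbra*{q+\varPsi\rbra*{\lambda}}$ using $\Re\rbra*{q+\varPsi\rbra*{\lambda}}=q+\theta\rbra*{\lambda}\ge q>0$, deduce $\mathcal{L}^1(\mathbb{R})$-integrability from Assumption~\ref{ass.A} together with the symmetry $\varPsi\rbra*{-\lambda}=\overline{\varPsi\rbra*{\lambda}}$, and apply Fourier inversion and the Riemann--Lebesgue lemma to obtain the density, its continuity, and its decay. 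The delicate points are all handled correctly, including the Fubini justifications and the conjugate-pairing that folds $\frac{1}{2\pi}\int_{\mathbb{R}}$ into $\frac{1}{\pi}\int_0^\infty\Re\rbra*{\cdots}\cd\lambda$.
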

The following result is taken from~\cite[Lemma 15.5]{Tsukada2018}.
\begin{Lem} \label{lem2.2}
Suppose that Assumption~\ref{ass.A} holds. Then it holds that
\begin{align}
	\int_0^\infty \frac{\lambda^2 \land 1}{\abs*{ \varPsi\rbra*{\lambda} }} \cd \lambda < \infty.
\end{align}
\end{Lem}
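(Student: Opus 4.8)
The plan is to split the integral at $\lambda=1$ and treat the two ranges by entirely different mechanisms, since on $(0,1]$ the weight $\lambda^2\land1=\lambda^2$ is what tames the singularity of $1/\abs*{\varPsi}$ at the origin, while on $[1,\infty)$ the weight is constantly $1$ and the whole difficulty is hidden in the set where $\abs*{\varPsi}$ is small. Throughout I will use the elementary facts $\theta=\Re\varPsi\ge0$ and hence $\abs*{\varPsi}\ge\theta$ and $\abs*{q+\varPsi}\le q+\abs*{\varPsi}$.

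\emph{Behaviour near the origin.} On $(0,1]$ it suffices to bound $\lambda^2/\theta(\lambda)$, since $\abs*{\varPsi}\ge\theta$. Because $0\le\frac{1-\cos\lambda x}{\lambda^2}\le\frac{x^2}{2}$ with pointwise limit $\frac{x^2}{2}$ as $\lambda\downarrow0$, Fatou's lemma gives $\liminf_{\lambda\downarrow0}\theta(\lambda)/\lambda^2\ge a+\frac12\int_{\mathbb R}x^2\,\nu(\cd x)$, and this quantity is strictly positive: the only way it can vanish is $a=0$ and $\nu=0$, i.e.\ a pure drift, for which $\abs*{q+\varPsi(\lambda)}=\sqrt{q^2+b^2\lambda^2}$ is not integrable and Assumption~\ref{ass.A} fails. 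Thus $\theta(\lambda)\ge c\lambda^2$ on some $(0,\delta]$, so $\lambda^2/\abs*{\varPsi(\lambda)}\le1/c$ there; on the compact $[\delta,1]$ the continuous function $\abs*{\varPsi}$ is bounded away from $0$ (it has no zero on $(0,\infty)$, see below), so the integrand is bounded on all of $(0,1]$ and $\int_0^1\frac{\lambda^2}{\abs*{\varPsi}}\,\cd\lambda<\infty$.

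\emph{The tail.} On $[1,\infty)$ I split according to the size of $\abs*{\varPsi}$. On $\{\abs*{\varPsi}\ge1\}$ one has $\abs*{1+\varPsi}\le1+\abs*{\varPsi}\le2\abs*{\varPsi}$, hence $\frac{1}{\abs*{\varPsi}}\le\frac{2}{\abs*{1+\varPsi}}$, which is integrable by Assumption~\ref{ass.A} with $q=1$. It remains to control $\int_E\frac{\cd\lambda}{\abs*{\varPsi}}$ over $E\coloneqq\{\lambda\ge1:\abs*{\varPsi(\lambda)}<1\}$. Here $E$ already has finite Lebesgue measure, because on $E$ we have $\abs*{1+\varPsi}\le2$, whence $\abs*{E}\le2\int_0^\infty\frac{\cd\lambda}{\abs*{1+\varPsi}}<\infty$.

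\emph{The main obstacle.} What remains, and is the crux of the lemma, is that $\abs*{\varPsi}$ does not come arbitrarily close to $0$ along the tail, i.e.\ $m\coloneqq\inf_{\lambda\ge1}\abs*{\varPsi(\lambda)}>0$; granting this, $\int_E\frac{\cd\lambda}{\abs*{\varPsi}}\le\abs*{E}/m<\infty$ and the proof is complete. I would deduce this from Assumption~\ref{ass.A} in two steps. First, $\varPsi$ has no zero on $(0,\infty)$: a zero $\varPsi(\lambda_0)=0$ forces $\theta(\lambda_0)=0$, hence $a=0$ and $\nu$ concentrated on the lattice $\frac{2\pi}{\lambda_0}\mathbb Z$; then $\nu$ is finite, $\varPsi$ is the exponent of a compound Poisson process with drift, and $\abs*{q+\varPsi}$ is either bounded or grows only linearly, so Assumption~\ref{ass.A} fails. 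This also justifies the claim used above on $[\delta,1]$. The hard part is the quantitative tail bound, for which the single-$q$ form of Assumption~\ref{ass.A} is not enough and the structure of $\varPsi$ must enter: my plan is to use that the oscillation of $\theta$ produced by the small jumps is controlled by $\int_{\abs*{x}<1}x^2\,\nu(\cd x)<\infty$, so that any excursion of $\abs*{\varPsi}$ down near $0$ is necessarily spread over an interval whose length is bounded below in terms of the depth of the excursion; each such excursion then contributes at least a fixed amount to $\int_0^\infty\frac{\cd\lambda}{\abs*{q+\varPsi}}$, and the finiteness of this integral for \emph{every} $q>0$ permits only finitely many excursions below any fixed level, yielding $m>0$. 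Converting the scalar integrability of Assumption~\ref{ass.A} into this geometric control of the near-zero set of the Lévy--Khinchin exponent is the step I expect to require the most care.
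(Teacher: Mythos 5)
Your handling of the two easy ranges is correct: the Fatou argument giving $\theta(\lambda)\ge c\lambda^2$ near $0$ (with the pure-drift degeneracy correctly excluded, since $\abs*{q+\ci b\lambda}$ is not integrable at infinity), the no-zero argument on compacts (a zero of $\theta$ forces $a=0$ and a lattice-supported, hence finite, L\'evy measure, i.e.\ compound Poisson with drift, for which Assumption (A) fails), and the tail reduction via $\frac{1}{\abs*{\varPsi}}\le\frac{2}{\abs*{1+\varPsi}}$ on $\cbra*{\abs*{\varPsi}\ge1}$ together with $\abs*{E}<\infty$. (For the record, the paper gives no proof to compare against: it imports the lemma from Tsukada's Lemma 15.5.) But the proof is genuinely incomplete at exactly the point you flag as the crux: the claim $m=\inf_{\lambda\ge1}\abs*{\varPsi(\lambda)}>0$ is left as a plan, and the plan's ingredients do not hold up as stated. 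The oscillation control you invoke is unavailable: the natural estimate $\abs*{\theta(\lambda+s)-\theta(\lambda)}\le\int\rbra*{\abs*{sx}\wedge2}\,\nu(\cd x)$ has an infinite right-hand side whenever $\int_{\cbra*{\abs*{x}<1}}\abs*{x}\,\nu(\cd x)=\infty$ (e.g.\ stable-type densities with $\alpha\in(1,2)$, precisely the regime of this paper), so $\int_{\cbra*{\abs*{x}<1}}x^2\,\nu(\cd x)<\infty$ gives $\theta$ no Lipschitz-type modulus and no lower bound on a dip's width in terms of its depth. Moreover, a dip of $\abs*{\varPsi}$ requires $\theta$ and $\omega$ to be small \emph{simultaneously}, and your excursion-counting sketch never engages with $\omega$ at all. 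As written, this step is a gap, not a proof.

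The fact you need is nonetheless true, and there is a soft argument that closes the gap. Under Assumption (A), the resolvent density $r_q$ of Proposition~\ref{prop2.1} is nonnegative with $\int_{\mathbb{R}}r_q(x)\,\cd x=1/q$ (take $f\equiv1$ in the characterization of $r_q$), so $r_q\in\mathcal{L}^1$, and its Fourier transform is $\int_{\mathbb{R}}\ce^{\ci\lambda x}r_q(x)\,\cd x=\rbra*{q+\varPsi(\lambda)}^{-1}$. The Riemann--Lebesgue lemma then gives $\rbra*{q+\varPsi(\lambda)}^{-1}\to0$, i.e.\ $\abs*{\varPsi(\lambda)}\to\infty$ as $\lambda\to\infty$. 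Combined with the continuity of $\varPsi$ and your (correct) no-zero argument, this yields $m>0$, and then directly $\int_1^\infty\frac{\cd\lambda}{\abs*{\varPsi(\lambda)}}\le\rbra*{1+m^{-1}}\int_1^\infty\frac{\cd\lambda}{\abs*{1+\varPsi(\lambda)}}<\infty$, which even renders your exceptional set $E$ unnecessary. With this substitution for your third step, your outline becomes a complete and correct proof.
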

Suppose that Assumption~\ref{ass.A} holds. For $ q>0 $ and $ x\in\mathbb{R} $, we set
\begin{align}
	h_q\rbra*{x} \coloneqq r_q\rbra*{0} - r_q\rbra*{-x} = \frac{1}{2\pi} \int_\mathbb{R} \frac{1 - \ce^{\ci \lambda x}}{q + \varPsi\rbra*{\lambda}} \cd \lambda 
	= \frac{1}{\pi} \int_0^\infty \Re \rbra*{ \frac{1 - \ce^{\ci \lambda x}}{q + \varPsi\rbra*{\lambda}} } \cd\lambda.
\end{align}
It is known that the following holds (see, e.g.,~\cite[Corollary II.18]{Bertoin}): 
	\begin{align}
		\mathbb{E}_x \sbra*{\ce^{-qT_0}} =\frac{r_q(-x)}{r_q(0)}\quad(x\in\mathbb{R},\,q>0). \label{laplace.hit.time}
	\end{align}
Hence, we have $h_q\ge0$ for all $q>0$.
The renormalized zero resolvent $h$ now satisfies 
\begin{align}
	h(x)=\lim_{q\rightarrow0+}h_q(x)=\lim_{q\rightarrow0+}\rbra*{r_q\rbra*{0} - r_q\rbra*{-x}}\ge0
\end{align}
if the limit exists. Its existence is guaranteed by Takeda--Yano~\cite[Theorem 1.1]{T-Y2023} under Assumption~\ref{ass.A}.

\section{Asymptotic behavior of the renormalized zero resolvents} \label{sec3}
We say a function $L$ is slowly varying at $a=0$ or $\infty$, if $L:(0,\infty)\mapsto[0,\infty)$ satisfies  
$\frac{L\rbra*{kx}}{L\rbra*{x}} \xrightarrow[x\to a]{} 1$ for all $ k>0 $. 
Let us suppose that the assumptions of Theorem~\ref{thm1.2} hold in this section. The existence, the finiteness and the integral representation of $h$ follow from Theorem~\ref{thm1.1}.
Recall that $\varPsi = \theta + \ci \omega$ and 
\begin{align}
	h\rbra*{x} = \frac{1}{\pi}\int_0^\infty \rbra*{1 - \cos\lambda x}\frac{\theta\rbra*{\lambda}}{\abs*{\varPsi\rbra*{\lambda}}^2} \cd \lambda 
	- \frac{1}{\pi}\int_0^\infty \sin\lambda x\frac{\omega\rbra*{\lambda}}{\abs*{\varPsi\rbra*{\lambda}}^2} \cd \lambda.
\end{align}
\begin{proof}[Proof of Theorem~\ref{thm1.2}]
We show that the integrals for small $\lambda$ vanish in the limit. For a constant $r>0$, which may be taken large 
enough later, we see that 
\begin{align}
	&\lim_{x\rightarrow0}\frac{L\rbra*{\abs*{x}^{-1}}}{\abs*{x}^{\alpha-1}}
	\int_0^r\rbra*{1-\cos\lambda x}\frac{\theta\rbra*{\lambda}}{\abs*{\varPsi\rbra*{\lambda}}^2}\cd\lambda = 0,\\ 
	&\lim_{x\rightarrow0}\frac{L\rbra*{\abs*{x}^{-1}}}{\abs*{x}^{\alpha-1}}
	\int_0^r\sin\lambda x\frac{\omega\rbra*{\lambda}}{\abs*{\varPsi\rbra*{\lambda}}^2}\cd\lambda = 0.
\end{align}
To check these, noting that $1-\cos t\le t^2$ and $\abs*{\sin t}\le\abs*{t}$ hold 
for $t\in\mathbb{R}$, we have
\begin{align}
	0\le\frac{L\rbra*{\abs*{x}^{-1}}}{\abs*{x}^{\alpha-1}} 
	\int_0^r\rbra*{1-\cos\lambda x}\frac{\theta\rbra*{\lambda}}{\abs*{\varPsi\rbra*{\lambda}}^2}\cd\lambda
	\le \abs*{x}^{3-\alpha} L\rbra*{\abs*{x}^{-1}} 
	\int_0^r \frac{\lambda^2}{\abs*{\varPsi\rbra*{\lambda}}}\cd\lambda \rightarrow 0 
	\quad ( x\rightarrow0)
\end{align}
and
\begin{align}
	&\frac{L\rbra*{\abs*{x}^{-1}}}{\abs*{x}^{\alpha-1}} 
	\int_0^r\abs*{\sin\lambda x}\frac{\abs*{\omega\rbra*{\lambda}}}{\abs*{\varPsi\rbra*{\lambda}}^2}\cd\lambda
	\le \abs*{x}^{2-\alpha} L\rbra*{\abs*{x}^{-1}}
	\int_0^r\abs*{\Im\rbra*{\frac{\lambda}{\varPsi\rbra*{\lambda}}}}\cd\lambda \rightarrow 0 \quad( x\rightarrow0).
\end{align}

We study the integrals for large $\lambda$. Let $\delta\in\rbra*{0,\rbra*{\alpha-1}\land\rbra*{2-\alpha}}$. 
By Potter's theorem (see, e.g.,~\cite[Theorem 1.5.6]{RegVar}), there exists $r_1>0$ such that 
\begin{align}
	\frac{L\rbra*{y}}{L\rbra*{x}} \le 2 \max\cbra*{ \rbra*{\frac{y}{x}}^\delta,\rbra*{\frac{y}{x}}^{-\delta} } \quad
	(x,y\ge r_1).
\end{align}
In addition, there exists $r_2$ such that 
\begin{align}
	\frac{1}{2}c_\theta \le \frac{\theta\rbra*{\lambda}}{\lambda^\alpha L\rbra*{\lambda}} \le 2 c_\theta,\,\,
	\quad \frac{1}{2}\abs*{c_\omega} \le \frac{\abs*{\omega\rbra*{\lambda}}}{\lambda^\alpha L\rbra*{\lambda}} 
	\le 2\abs*{c_\omega} + 1\quad(\lambda\ge r_2).
\end{align}
We now take $r=\max\cbra{r_1,r_2}$ and then, for $\lambda\ge r$, it holds that
\begin{align}
\frac{\abs*{\varPsi\rbra*{\lambda}}}{\lambda^\alpha L\rbra*{\lambda}}
\ge\frac{1}{2}\sqrt{{c_\theta}^2+{c_\omega}^2} 
\end{align} 
and 
\begin{align}
	0\le\frac{\theta\rbra*{\lambda}}{\abs*{\varPsi\rbra*{\lambda}}^2} \lambda^\alpha L\rbra*{\lambda} 
	\le \frac{8 c_\theta }{{c_\theta}^2 + {c_\omega}^2} ,\quad
	0\le\frac{\abs*{\omega\rbra*{\lambda}}}{\abs*{\varPsi\rbra*{\lambda}}^2} \lambda^\alpha L\rbra*{\lambda} 
	\le \frac{8\abs*{ c_\omega } +4}{{c_\theta}^2 + {c_\omega}^2} .
\end{align}
Note that 
\begin{align}
	\frac{\theta\rbra*{\lambda}}{\abs*{\varPsi\rbra*{\lambda}}^2} \lambda^\alpha L\rbra*{\lambda}
	\rightarrow\frac{c_\theta}{{c_\theta}^2 + {c_\omega}^2}\quad\text{ and } \quad
	&\frac{\omega\rbra*{\lambda}}{\abs*{\varPsi\rbra*{\lambda}}^2} \lambda^\alpha L\rbra*{\lambda}
	\rightarrow\frac{c_\omega}{{c_\theta}^2 + {c_\omega}^2}\quad\text{ as }\, \lambda\rightarrow\infty.
\end{align}
The integrals for large $\lambda$ can be expressed as follows:
	\begin{align}
		&\frac{L\rbra*{\abs*{x}^{-1}}}{\abs*{x}^{\alpha-1}}\int_r^\infty\rbra*{1-\cos\lambda x}
		\frac{\theta\rbra*{\lambda}}{\abs*{\varPsi\rbra*{\lambda}}^2}\cd\lambda \notag\\
		=&\,\int_{r\abs*{x}}^\infty\frac{1-\cos \lambda}{\lambda^\alpha}\cdot\frac{\theta\rbra*{\frac{\lambda}{\abs*{x}}}}
		{\abs*{\varPsi\rbra*{\frac{\lambda}{\abs*{x}}}}^2}
		\rbra*{\frac{\lambda}{\abs*{x}}}^\alpha L\rbra*{\frac{\lambda}{\abs*{x}}} 
		\frac{L\rbra*{\frac{1}{\abs*{x}}}}{L\rbra*{\frac{\lambda}{\abs*{x}}}} \cd\lambda
	\end{align}
	and
	\begin{align}
		&\frac{L\rbra*{\abs*{x}^{-1}}}{\abs*{x}^{\alpha-1}}
		\int_r^\infty\sin\lambda x\frac{\omega\rbra*{\lambda}}{\abs*{\varPsi\rbra*{\lambda}}^2}\cd\lambda \notag\\
		=&\, \frac{L\rbra*{\abs*{x}^{-1}}}{\abs*{x}^{\alpha-1}} \text{sgn}\rbra*{x}
		\int_r^\infty\sin\lambda\abs*{x}\frac{\omega\rbra*{\lambda}}{\abs*{\varPsi\rbra*{\lambda}}^2}\cd\lambda\notag\\
		=&\, \text{sgn} \rbra*{x} \int_{r\abs*{x}}^\infty\ \frac{\sin \lambda}{\lambda^\alpha} \cdot
		\frac{\omega\rbra*{\frac{\lambda}{\abs*{x}}}}
		{\abs*{\varPsi\rbra*{\frac{\lambda}{\abs*{x}}}}^2}
		\rbra*{\frac{\lambda}{\abs*{x}}}^\alpha L\rbra*{\frac{\lambda}{\abs*{x}}} 
		\frac{L\rbra*{\frac{1}{\abs*{x}}}}{L\rbra*{\frac{\lambda}{\abs*{x}}}} \cd\lambda.
	\end{align}
If we can exchange the limit and the integral, we have 
	\begin{align}
		&\lim_{x\rightarrow0}\frac{L\rbra*{\abs*{x}^{-1}}}{\abs*{x}^{\alpha-1}}
		\int_r^\infty\rbra*{1-\cos\lambda x}\frac{\theta\rbra*{\lambda}}{\abs*{\varPsi\rbra*{\lambda}}^2}\cd\lambda 
		= \frac{c_\theta}{{c_\theta}^2+{c_\omega}^2} \int_0^\infty 
		\frac{1-\cos \lambda}{\lambda^\alpha} \cd \lambda 
		= \frac{\pi C_\alpha c_\theta}{{c_\theta}^2+{c_\omega}^2} \label{thm1.2.lim.cos}
	\end{align}
	and
	\begin{align}
		&\lim_{x\rightarrow0\pm}\frac{L\rbra*{\abs*{x}^{-1}}}{\abs*{x}^{\alpha-1}}
		\int_r^\infty\sin\lambda x\frac{\omega\rbra*{\lambda}}{\abs*{\varPsi\rbra*{\lambda}}^2}\cd\lambda 
		= \pm\frac{c_\omega}{{c_\theta}^2+{c_\omega}^2} 
		\int_0^\infty \frac{\sin\lambda}{\lambda^\alpha} \cd \lambda 
		= \pm\frac{\pi C_\alpha c_\omega}{{c_\theta}^2+{c_\omega}^2} \tan\frac{\pi(\alpha-1)}{2}. \label{thm1.2.lim.sin}
	\end{align}
Here we used Eq.~\eqref{Calpha} and its variation: 
\begin{align}
	&\frac{1}{\pi} \int_0^\infty \frac{\sin x}{x^\beta} \cd x = C_\beta \tan\frac{\pi\rbra*{\beta-1}}{2}\quad
	\rbra{\beta\in\rbra*{1,2}},
\end{align}
which is obtained from Eq.~\eqref{Calpha} by integration by parts. 
Let us justify the exchange by the dominated convergence theorem. 
Note that, for $\abs*{x}\in\rbra*{0,r^{-1}}$, we have, by $ \delta\in\rbra*{0,\rbra*{\alpha-1}\land\rbra*{2-\alpha}} $ 
	\begin{align}
		0\le&\,\frac{1-\cos \lambda}{\lambda^\alpha}\cdot\frac{\theta\rbra*{\frac{\lambda}{\abs*{x}}}}
		{\abs*{\varPsi\rbra*{\frac{\lambda}{\abs*{x}}}}^2}
		\rbra*{\frac{\lambda}{\abs*{x}}}^\alpha L\rbra*{\frac{\lambda}{\abs*{x}}} 
		\frac{L\rbra*{\frac{1}{\abs*{x}}}}{L\rbra*{\frac{\lambda}{\abs*{x}}}} \cdot1_{\cbra*{\lambda\ge r\abs*{x}}}\notag\\
		\le&\,\frac{16c_\theta}{{c_\theta}^2+{c_\omega}^2}\cdot
		\frac{1-\cos\lambda}{\lambda^\alpha}\max\cbra*{\lambda^\delta,\lambda^{-\delta}}\cdot1_{\{\lambda>0\}} \notag\\
		\le&\,\frac{16c_\theta}{{c_\theta}^2+{c_\omega}^2}\cdot
		\frac{1-\cos\lambda}{\lambda^{\alpha+\delta}}\cdot1_{\cbra*{\lambda\in\rbra*{0,1}}}+ 
		\frac{32c_\theta}{{c_\theta}^2+{c_\omega}^2}\cdot\frac{1}{\lambda^{\alpha-\delta}}
		\cdot1_{\cbra*{\lambda\ge1}}
		\in\mathcal{L}^1 
	\end{align}
	and
	\begin{align}
		0\le&\,\frac{\abs*{\sin\lambda}}{\lambda^\alpha}\cdot\frac{\abs*{\omega\rbra*{\frac{\lambda}{\abs*{x}}}}}
		{\abs*{\varPsi\rbra*{\frac{\lambda}{\abs*{x}}}}^2}
		\rbra*{\frac{\lambda}{\abs*{x}}}^\alpha L\rbra*{\frac{\lambda}{\abs*{x}}} 
		\frac{L\rbra*{\frac{1}{\abs*{x}}}}{L\rbra*{\frac{\lambda}{\abs*{x}}}} \cdot1_{\{\lambda\ge r\abs*{x}\}}\notag\\
		\le&\,\frac{16\abs*{c_\omega}+8}{{c_\theta}^2+{c_\omega}^2}\cdot
		\frac{\abs*{\sin\lambda}}{\lambda^\alpha}
		\max\cbra*{\lambda^\delta,\lambda^{-\delta}}\cdot1_{\{\lambda>0\}} \notag\\
		\le&\,\frac{16\abs*{c_\omega}+8}{{c_\theta}^2+{c_\omega}^2}\cdot
		\frac{\abs*{\sin\lambda}}{\lambda^{\alpha+\delta}}\cdot1_{\cbra*{\lambda\in\rbra*{0,1}}}+ 
		\frac{16\abs*{c_\omega}+8}{{c_\theta}^2+{c_\omega}^2}\cdot\frac{1}{\lambda^{\alpha-\delta}}
		\cdot1_{\cbra*{\lambda\ge1}}
		\in\mathcal{L}^1.
	\end{align}
Hence, we may apply the dominated convergence theorem and obtain Eq.~\eqref{thm1.2.lim.cos} and~\eqref{thm1.2.lim.sin}. 
Consequently we conclude that
\begin{align}
	\lim_{x\rightarrow0\pm}\frac{L\rbra*{\abs*{x}^{-1}}}{\abs*{x}^{\alpha-1}}h\rbra*{x} 
	= \frac{C_\alpha}{{c_\theta}^2+{c_\omega}^2} 
	\rbra*{c_\theta\pm c_\omega\cot\frac{\pi\alpha}{2}}.\label{eq.04} 
\end{align}
The proof is now complete.
\end{proof}
The following theorem shows that $h_q$ for $q>0$ has the same asymptotic behavior as $h$. 
\begin{Thm} \label{thm3.1}
	Suppose that the assumptions of Theorem~\ref{thm1.2} hold. Then, for every $q>0$, it holds that 
	\begin{align}
		\frac{h_q\rbra*{x}}{h\rbra{x}}\rightarrow 1\quad(x\rightarrow0\pm), \quad \text{ when $c_\pm\neq0$},
	\end{align}
	where $c_+$ and $c_-$ are defined by~\eqref{eq.c+-}.
\end{Thm}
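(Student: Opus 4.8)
The goal is to establish that \(h_q\) obeys exactly the same one-sided scaling limit as \(h\); namely, I will show
\begin{align}
	\lim_{x\to0\pm}\frac{L\rbra*{\abs*{x}^{-1}}}{\abs*{x}^{\alpha-1}}h_q\rbra*{x}
	=\frac{C_\alpha}{{c_\theta}^2+{c_\omega}^2}\rbra*{c_\theta\pm c_\omega\cot\frac{\pi\alpha}{2}}=c_\pm .
\end{align}
Once this is known, combining it with the identical limit~\eqref{eq.04} for \(h\) and using \(c_\pm\neq0\) gives, as \(x\to0\pm\),
\begin{align}
	\frac{h_q\rbra*{x}}{h\rbra*{x}}
	=\frac{\dfrac{L\rbra*{\abs*{x}^{-1}}}{\abs*{x}^{\alpha-1}}h_q\rbra*{x}}
	{\dfrac{L\rbra*{\abs*{x}^{-1}}}{\abs*{x}^{\alpha-1}}h\rbra*{x}}\longrightarrow\frac{c_\pm}{c_\pm}=1 ,
\end{align}
which is the assertion.

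To obtain the displayed limit I would rerun the proof of Theorem~\ref{thm1.2} verbatim, replacing \(\varPsi\) by \(q+\varPsi\) throughout, starting from
\begin{align}
	h_q\rbra*{x}=\frac{1}{\pi}\int_0^\infty\rbra*{1-\cos\lambda x}\frac{q+\theta\rbra*{\lambda}}{\abs*{q+\varPsi\rbra*{\lambda}}^2}\cd\lambda
	-\frac{1}{\pi}\int_0^\infty\sin\lambda x\,\frac{\omega\rbra*{\lambda}}{\abs*{q+\varPsi\rbra*{\lambda}}^2}\cd\lambda ,
\end{align}
and splitting each integral at a large constant \(r>0\). For the small-\(\lambda\) parts the contributions vanish after multiplication by \(L\rbra*{\abs*{x}^{-1}}/\abs*{x}^{\alpha-1}\); here the analysis is in fact simpler than for \(h\), because \(\abs*{q+\varPsi\rbra*{\lambda}}\ge q+\theta\rbra*{\lambda}\ge q>0\) keeps the integrands away from the singularity of \(1/\varPsi\) at the origin, so Lemma~\ref{lem2.2} is not even needed. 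Concretely, \(1-\cos t\le t^2\) and \(\abs*{\sin t}\le\abs*{t}\) together with \(\frac{q+\theta}{\abs*{q+\varPsi}^2}\le\frac1q\) and \(\frac{\abs*{\omega}}{\abs*{q+\varPsi}^2}\le\frac1q\) bound the two pieces by \(\abs*{x}^{3-\alpha}L\rbra*{\abs*{x}^{-1}}\cdot\frac{r^3}{3q}\) and \(\abs*{x}^{2-\alpha}L\rbra*{\abs*{x}^{-1}}\cdot\frac{r^2}{2q}\), which tend to \(0\) since \(3-\alpha>0\), \(2-\alpha>0\) and \(L\) is slowly varying.

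For the large-\(\lambda\) parts I would use the substitution \(\lambda\mapsto\lambda/\abs*{x}\) exactly as before. The decisive observation is that adding the constant \(q\) is asymptotically invisible at infinity: since \(\theta\rbra*{\lambda}\to\infty\) and \(\abs*{\varPsi\rbra*{\lambda}}\to\infty\) as \(\lambda\to\infty\), one has \(q+\theta\rbra*{\lambda}\sim\theta\rbra*{\lambda}\) and \(\abs*{q+\varPsi\rbra*{\lambda}}\sim\abs*{\varPsi\rbra*{\lambda}}\), so the pointwise limits of the rescaled integrands are
\begin{align}
	\frac{q+\theta\rbra*{\lambda/\abs*{x}}}{\abs*{q+\varPsi\rbra*{\lambda/\abs*{x}}}^2}\rbra*{\frac{\lambda}{\abs*{x}}}^\alpha L\rbra*{\frac{\lambda}{\abs*{x}}}
	\to\frac{c_\theta}{{c_\theta}^2+{c_\omega}^2},\qquad
	\frac{\omega\rbra*{\lambda/\abs*{x}}}{\abs*{q+\varPsi\rbra*{\lambda/\abs*{x}}}^2}\rbra*{\frac{\lambda}{\abs*{x}}}^\alpha L\rbra*{\frac{\lambda}{\abs*{x}}}
	\to\frac{c_\omega}{{c_\theta}^2+{c_\omega}^2}
\end{align}
as \(x\to0\), identical to those in the proof of Theorem~\ref{thm1.2}; together with \(L\rbra*{1/\abs*{x}}/L\rbra*{\lambda/\abs*{x}}\to1\) this reproduces the values \(\frac{\pi C_\alpha c_\theta}{{c_\theta}^2+{c_\omega}^2}\) and \(\pm\frac{\pi C_\alpha c_\omega}{{c_\theta}^2+{c_\omega}^2}\tan\frac{\pi(\alpha-1)}{2}\), whose sum is \(c_\pm\).

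The only point requiring genuine care, and the main obstacle, is the construction of the \(\mathcal{L}^1\) majorants for dominated convergence, since the elementary bounds used for \(h\) must now absorb the extra \(q\). I would enlarge \(r\) so that, in addition to Potter's theorem and the two-sided estimates on \(\theta/(\lambda^\alpha L)\) and \(\abs*{\omega}/(\lambda^\alpha L)\) for arguments \(\ge r\), one also has \(q\le\theta\rbra*{\lambda}\) and \(q\le\frac12\abs*{\varPsi\rbra*{\lambda}}\) for all \(\lambda\ge r\); this is possible because both \(\theta\) and \(\abs*{\varPsi}\) diverge. Then \(q+\theta\rbra*{\lambda}\le2\theta\rbra*{\lambda}\) and \(\abs*{q+\varPsi\rbra*{\lambda}}\ge\frac12\abs*{\varPsi\rbra*{\lambda}}\) on \(\cbra*{\lambda\ge r}\), so the rescaled integrands are dominated, up to harmless numerical constants, by the same integrable functions
\begin{align}
	\frac{1-\cos\lambda}{\lambda^{\alpha+\delta}}1_{\cbra*{\lambda\in\rbra*{0,1}}}+\frac{1}{\lambda^{\alpha-\delta}}1_{\cbra*{\lambda\ge1}}
	\quad\text{and}\quad
	\frac{\abs*{\sin\lambda}}{\lambda^{\alpha+\delta}}1_{\cbra*{\lambda\in\rbra*{0,1}}}+\frac{1}{\lambda^{\alpha-\delta}}1_{\cbra*{\lambda\ge1}}
\end{align}
with \(\delta\in\rbra*{0,\rbra*{\alpha-1}\land\rbra*{2-\alpha}}\) already used in the proof of Theorem~\ref{thm1.2}. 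Dominated convergence then yields the displayed limit, completing the argument.
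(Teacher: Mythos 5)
Your proof is correct and follows exactly the route the paper intends: the paper omits the proof of Theorem~\ref{thm3.1} with the remark that it is ``quite similar to that of Theorem~\ref{thm1.2}'', and your argument is precisely that rerun, with the two points needing care handled properly --- the bound $\abs*{q+\varPsi\rbra*{\lambda}}\ge q$ simplifying the small-$\lambda$ estimates, and the bounds $q+\theta\le 2\theta$, $\abs*{q+\varPsi}\ge\frac{1}{2}\abs*{\varPsi}$ for large $\lambda$ reducing the dominated-convergence majorants to those of Theorem~\ref{thm1.2}. Nothing further is needed.
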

We omit the proof because it is quite similar to that of Theorem~\ref{thm1.2}.

\section{Regular variation of the L\'{e}vy-Khinchin exponents} \label{sec4}
Suppose that the assumptions of Theorem~\ref{thm1.3} hold. Recall that for $ \lambda \in \mathbb{R} $
	\begin{align}
		\theta\rbra*{\lambda} =\int_\mathbb{R} \rbra*{1 - \cos \lambda x} \nu\rbra*{\cd x},\, \quad 
		\omega\rbra*{\lambda} =b\lambda + 
		\int_\mathbb{R} \rbra*{\lambda x\cdot1_{\cbra{\abs*{x}<1}} - \sin \lambda x} \nu\rbra*{\cd x}. 
	\end{align}
\begin{proof}[Proof of Theorem~\ref{thm1.3}]
Let $r > 0$. 
For the integrals for large $\abs{x}$, we have 
\begin{align}
	&\lim_{\lambda\rightarrow\infty} \frac{1}{\lambda^\alpha K_\pm\rbra*{\lambda^{-1}}} 
	\int_{\{\abs*{x}>r\}} \rbra*{1-\cos\lambda x} \nu\rbra*{\cd x} = 0, \label{eq.06}\\
	&\lim_{\lambda\rightarrow\infty} \frac{1}{\lambda^\alpha K_\pm\rbra*{\lambda^{-1}}} \rbra*{ b\lambda + 
	\int_{\{\abs*{x}>r\}} \rbra*{\lambda x\cdot1_{\cbra{\abs*{x}<1}}-\sin\lambda x} \nu\rbra*{\cd x}} = 0,\label{eq.07}
\end{align}
since
\begin{align}
	0\le\int_{\{\abs*{x}>r\}} \rbra*{1-\cos\lambda x} \nu\rbra*{\cd x} &\le 2\nu\rbra*{[-r,r]^c}
\end{align}
and
\begin{align}
	\int_{\{\abs*{x}>r\}} \abs*{\lambda x\cdot1_{\cbra{\abs*{x}<1}}-\sin\lambda x }\nu\rbra*{\cd x} 
	&\le (\lambda+1)\nu\rbra*{[-r,r]^c}.
\end{align}
For the integrals for small $\abs{x}$, we shall find a constant $r\in(0,1)$ such that the following hold:
\begin{align}
	&\lim_{\lambda\rightarrow\infty} \frac{1}{\lambda^\alpha K_\pm\rbra*{\lambda^{-1}}} \int_{\{\pm x \in (0,r]\}} \rbra*{1-\cos\lambda x} \nu\rbra*{\cd x} 
	= \int_0^\infty \frac{1-\cos x}{x^{\alpha+1}} \cd x, \label{limoftheta}\\
	&\lim_{\lambda\rightarrow\infty} \frac{1}{\lambda^\alpha K_\pm\rbra*{\lambda^{-1}}} \int_{\{\pm x \in (0,r]\}} \rbra*{\lambda x-\sin\lambda x} \nu\rbra*{\cd x} 
	= \pm \int_0^\infty \frac{x-\sin x}{x^{\alpha+1}} \cd x. \label{limofomega}
\end{align} 
To check these, note that 
\begin{align}
	&\frac{1}{\lambda^\alpha K_\pm\rbra*{\lambda^{-1}}}
	\int_{\{\pm{x}\in(0,r]\}} \rbra*{1-\cos\lambda x} \nu\rbra*{\cd x} 
	= \int_0^{r\lambda} \frac{1-\cos x}{x^{\alpha+1}} \cdot
	\frac{\xi_\pm\rbra*{\frac{x}{\lambda}}}{\rbra*{\frac{x}{\lambda}}^{-\alpha-1}K_\pm\rbra*{\frac{x}{\lambda}}}  \cdot
	 \frac{K_\pm\rbra*{\frac{x}{\lambda}}}{K_\pm\rbra*{\frac{1}{\lambda}}} \cd x ,\\
	&\frac{1}{\lambda^\alpha K_\pm\rbra*{\lambda^{-1}}}
	\int_{\{\pm{x}\in(0,r]\}} \rbra*{\lambda x-\sin\lambda x} \nu\rbra*{\cd x} 
	= \pm \int_0^{r\lambda} \frac{x - \sin x}{x^{\alpha+1}}\cdot
	\frac{\xi_\pm\rbra*{\frac{x}{\lambda}}}{\rbra*{\frac{x}{\lambda}}^{-\alpha-1}K_\pm\rbra*{\frac{x}{\lambda}}}  \cdot
	 \frac{K_\pm\rbra*{\frac{x}{\lambda}}}{K_\pm\rbra*{\frac{1}{\lambda}}} \cd x. 
\end{align}
If we can exchange the limit and the integral, then~\eqref{limoftheta} and~\eqref{limofomega} hold. 
We justify the exchange by the dominated convergence theorem. Let $r_1\in(0,1)$ be such that 
$\frac{1}{2}\le\frac{\xi_\pm\rbra*{x}}{x^{-\alpha-1}K_\pm\rbra*{x}}\le2$ for $0<x<r_1$. 
By Potter's theorem, for fixed 
$0< \delta <\rbra*{\alpha-1}\land\rbra*{2-\alpha} $, there exists a sufficiently small $r_2\in(0,1)$ such that 
\begin{align}
	\frac{K_\pm\rbra*{y^{-1}}}{K_\pm\rbra*{x^{-1}}} 
	\le 2 \max\cbra*{\rbra*{\frac{y}{x}}^\delta,\rbra*{\frac{y}{x}}^{-\delta}} \quad (x,\,y\ge {r_2}^{-1}).
\end{align}
Now take $r=\min\cbra*{r_1,r_2}$. Note that if $\lambda\ge r^{-1}\ge{r_2}^{-1}$ and $0<x\le r\lambda$, then
 $x^{-1}\lambda\ge r^{-1}\ge{r_2}^{-1}$ holds and  
$\frac{K_\pm\rbra*{\frac{x}{\lambda}}}{K_\pm\rbra*{\frac{1}{\lambda}}} \le 2 \max\cbra*{x^\delta,x^{-\delta}}$ holds. 
Hence, for $\lambda\ge r^{-1}$, it holds that
\begin{align}
	0\le&\,\frac{1-\cos x}{x^{\alpha+1}} \cdot\frac{\xi_\pm\rbra*{\frac{x}{\lambda}}}
	{\rbra*{\frac{x}{\lambda}}^{-\alpha-1}K_\pm\rbra*{\frac{x}{\lambda}}} \cdot
	\frac{K_\pm\rbra*{\frac{x}{\lambda}}}
	{K_\pm\rbra*{\frac{1}{\lambda}}} \cdot1_{\{x\in(0,r\lambda]\}}  \notag\\
	\le&\, 4 \rbra*{1-\cos x} x^{-\alpha-1}\max\cbra*{x^\delta,x^{-\delta}} \cdot1_{\{x>0\}} \notag\\
	\le&\,\frac{4\rbra*{1-\cos x}}{x^{\alpha+1+\delta}}\cdot1_{\{x\in(0,1]\}}+\frac{8}{x^{\alpha+1-\delta}}
	\cdot1_{\{x>1\}} \in \mathcal{L}^1
\end{align}
and
\begin{align}
	0\le&\,\rbra*{x-\sin x} 
	\frac{\xi_\pm\rbra*{\frac{x}{\lambda}}}{\rbra*{\frac{x}{\lambda}}^{-\alpha-1}K_\pm\rbra*{\frac{x}{\lambda}}} \cdot
	\frac{\rbra*{\frac{x}{\lambda}}^{-\alpha-1} K_\pm\rbra*{\frac{x}{\lambda}}}
	{\rbra*{\frac{1}{\lambda}}^{-\alpha-1} K_\pm\rbra*{\frac{1}{\lambda}}} \cdot1_{\{x\in(0,r\lambda]\}} \notag\\
	\le&\, 4\rbra*{x-\sin x} x^{-\alpha-1}\max\cbra*{x^\delta,x^{-\delta}} \cdot1_{\{x>0\}} \notag\\
	\le&\, \frac{4\rbra*{x-\sin x}}{x^{\alpha+1+\delta}} \cdot1_{\{x\in(0,1]\}} 
	+ \frac{8}{x^{\alpha-\delta}} \cdot1_{\{x>1\}} 
	\in \mathcal{L}^1.
\end{align}
The integrability mentioned above follows from $0< \delta <\rbra*{\alpha-1}\land\rbra*{2-\alpha} $.
Hence, it follows by the dominated convergence theorem that the convergences Eq.~\eqref{limoftheta} 
and~\eqref{limofomega} hold.
Recall that $k = \lim_{x\rightarrow0+} \frac{K_-\rbra*{x}}{K_+\rbra*{x}} \in[0,\infty]$. 
Then it can be concluded that
\begin{align}
	\frac{\theta\rbra*{\lambda}}{\lambda^\alpha\rbra*{K_+\rbra*{\lambda^{-1}}+K_-\rbra*{\lambda^{-1}}}} 
	\xrightarrow[\lambda\rightarrow\infty]{} \int_0^\infty \frac{1-\cos x}{x^{\alpha+1}} \cd x  
	= \pi C_{\alpha+1} \label{eq.08}
\end{align}
and
\begin{align} 
	\frac{\omega\rbra*{\lambda}}{\lambda^\alpha\rbra*{K_+\rbra*{\lambda^{-1}}+K_-\rbra*{\lambda^{-1}}}} 
	\xrightarrow[\lambda\rightarrow\infty]{} 
	\frac{1-k}{1+k}\int_0^\infty \frac{x-\sin x}{x^{\alpha+1}} \cd x 
	=\frac{1-k}{1+k}\cdot\frac{\pi C_\alpha}{\alpha}. \label{eq.09} 
\end{align}
Here we used the formula 
\begin{align}
	&\frac{1}{\pi} \int_0^\infty \frac{x-\sin x}{x^{\alpha+1}} \cd x = \frac{C_\alpha}{\alpha}\quad
	\rbra{\alpha\in\rbra*{1,3}}, \label{x-sin.C_}
\end{align}
which is obtained from Eq.~\eqref{Calpha} by integration by parts.
The proof is now complete.
\end{proof}

\section{Asymptotic behavior of the renormalized zero resolvents with positive Gaussian coefficient} \label{sec5}
In this section, we consider the case where the Gaussian coefficient $a>0$. 
Recall that $\theta$ and $\omega$ are given as
\begin{align}
\theta\rbra*{\lambda} &= \Re\varPsi\rbra*{\lambda} = a\lambda^2 + \int_\mathbb{R} \rbra*{1 - \cos \lambda x} \nu\rbra*{\cd x} ,  \\
\omega\rbra*{\lambda} &= \Im\varPsi\rbra*{\lambda} = b\lambda + \int_\mathbb{R} \rbra*{\lambda x \cdot1_{\{\abs*{x} < 1\}} - \sin \lambda x} \nu\rbra*{\cd x}.  
\end{align}
Note that 
\begin{align}
	\lim_{\lambda\rightarrow\infty} \frac{\theta(\lambda)}{\lambda^2} = a , \quad
	\lim_{\lambda\rightarrow\infty} \frac{\omega(\lambda)}{\lambda^2} = 0. \label{lim/2}
\end{align}
In fact, we may apply the dominated convergence theorem because
\begin{align}
	&\sup_{\lambda>1}\rbra*{\frac{1-\cos\lambda x}{\lambda^2}}\le 
	x^2\cdot1_{\cbra*{\abs{x}<1}}+2\cdot1_{\cbra*{\abs{x}\ge1}}\in\mathcal{L}^1(\nu),\\
	&\sup_{\lambda>1}\rbra*{\frac{\lambda x \cdot1_{\{\abs*{x} < 1\}} - \sin \lambda x}{\lambda^2}} 
	\le x^2 \cdot1_{\cbra*{\abs{x}<1}} + 1_{\cbra*{\abs{x}\ge1}} \in\mathcal{L}^1(\nu).%,\\
\end{align}
Here we used the inequality $\abs*{x-\sin x}\le x^2$ for $x\in\mathbb{R}$.
Note also that Assumption~\ref{ass.A} is automatically satisfied, since
\begin{align} 
	\abs*{q+\varPsi(\lambda)}\ge q \quad(\lambda\ge0)\quad\text{and}\quad
	\frac{\abs*{q+\varPsi(\lambda)}}{\lambda^2}\rightarrow a\quad(\lambda\rightarrow\infty). \label{check.for.A.a>0}
\end{align} 
Let us consider the following assumption: 
\begin{align}
\text{The Gaussian coefficient $a>0$  and} \int_0^\infty \abs*{\Im\rbra*{\frac{\lambda}{\varPsi(\lambda)}}} \cd \lambda <\infty. \label{ass.Z}
\end{align} 
This assumption is stronger than Assumption~\ref{ass.T}. A sufficient condition for Assumption~\eqref{ass.Z} is that $a>0$ as well as 
\begin{align}
	\int_0^1\abs*{\Im\rbra*{\frac{\lambda}{\varPsi(\lambda)}}}<\infty \quad\text{and}\quad 
	\lim_{\lambda\rightarrow\infty} \frac{\omega(\lambda)}{\lambda^{2-\epsilon}} = 0 \quad\text{for some $\epsilon>0$.}
\end{align}
In fact, the following holds: 
	\begin{align}
		\abs*{\Im\rbra*{\frac{\lambda}{\varPsi(\lambda)}}}\cdot \lambda^{1+\epsilon}
		= \frac{\abs*{\omega(\lambda)}}
		{\lambda^{2-\epsilon}}\cdot\frac{\lambda^4}{\abs*{\varPsi(\lambda)}^2} 
		\xrightarrow[\lambda\rightarrow\infty]{} 0.
	\end{align}
\begin{Thm}\label{thm5.1}
	Suppose that Assumption~\eqref{ass.Z} holds. Then 	it holds that
	\begin{align}
		h(x) \sim c_\pm \abs{x} \quad(x\rightarrow0\pm),
	\end{align}
	where the coefficients $c_+$ and $c_-$ are given as
	\begin{align}
		c_\pm = \frac{1}{2a}\pm\frac{1}{\pi}\int_0^\infty \Im\rbra*{\frac{\lambda}{\varPsi(\lambda)}}\cd\lambda. \label{thm5.1.c+-}
	\end{align}
\end{Thm}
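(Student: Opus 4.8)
The plan is to follow the proof of Theorem~\ref{thm1.2} while observing that the present case corresponds to the boundary exponent $\alpha=2$ with slowly varying part $L\equiv1$, which is excluded from the range $\alpha\in\rbra*{1,2}$ treated there; hence a separate, direct argument is required. Since Assumption~\eqref{ass.Z} is stronger than Assumption~\ref{ass.T}, Theorem~\ref{thm1.1} ensures that $h$ exists and admits the representation
\begin{align}
	h\rbra*{x}=\frac{1}{\pi}\int_0^\infty\rbra*{1-\cos\lambda x}\frac{\theta\rbra*{\lambda}}{\abs*{\varPsi\rbra*{\lambda}}^2}\cd\lambda-\frac{1}{\pi}\int_0^\infty\sin\lambda x\frac{\omega\rbra*{\lambda}}{\abs*{\varPsi\rbra*{\lambda}}^2}\cd\lambda. \notag
\end{align}
I would study $h\rbra*{x}/\abs*{x}$ and treat the two integrals separately, aiming to show that the first (the $\theta$-part) converges to $\frac{1}{2a}$ and the second (the $\omega$-part) to $\pm\frac{1}{\pi}\int_0^\infty\Im\rbra*{\lambda/\varPsi\rbra*{\lambda}}\cd\lambda$ as $x\to0\pm$; adding these yields $c_\pm$ as in~\eqref{thm5.1.c+-}.

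For the $\theta$-part I would fix a large $r>0$ and split at $\lambda=r$. On $\rbra*{0,r}$, using $1-\cos\lambda x\le\rbra*{\lambda x}^2$ and $\theta/\abs*{\varPsi}^2\le1/\abs*{\varPsi}$, the quantity $\frac{1}{\abs*{x}}\int_0^r\rbra*{1-\cos\lambda x}\frac{\theta}{\abs*{\varPsi}^2}\cd\lambda$ is dominated by a constant times $\abs*{x}\int_0^r\frac{\lambda^2}{\abs*{\varPsi}}\cd\lambda$, which tends to $0$ as $x\to0$ because $\int_0^\infty\frac{\lambda^2\land1}{\abs*{\varPsi}}\cd\lambda<\infty$ by Lemma~\ref{lem2.2} (Assumption~\eqref{ass.Z} implies Assumption~\ref{ass.A} via~\eqref{check.for.A.a>0}). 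On $\rbra*{r,\infty}$, the substitution $u=\lambda\abs*{x}$ rewrites $\frac{1}{\abs*{x}}\int_r^\infty\rbra*{1-\cos\lambda x}\frac{\theta}{\abs*{\varPsi}^2}\cd\lambda$ as $\frac{1}{\pi}\int_{r\abs*{x}}^\infty\frac{1-\cos u}{u^2}\cdot\frac{\theta\rbra*{\mu}/\mu^2}{\abs*{\varPsi\rbra*{\mu}}^2/\mu^4}\cd u$ with $\mu=u/\abs*{x}$; since $\theta\rbra*{\mu}/\mu^2\to a$ and $\abs*{\varPsi\rbra*{\mu}}^2/\mu^4\to a^2$ as $\mu\to\infty$ by~\eqref{lim/2} and~\eqref{check.for.A.a>0}, the second factor tends to $1/a$, and taking $r$ large makes it uniformly bounded for $\mu\ge r$. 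Dominated convergence with the integrable majorant (a constant times $\frac{1-\cos u}{u^2}$) then gives $\frac{1}{\pi a}\int_0^\infty\frac{1-\cos u}{u^2}\cd u=\frac{C_2}{a}=\frac{1}{2a}$, using~\eqref{Calpha}.

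For the $\omega$-part I would not rescale. Assumption~\eqref{ass.Z} provides that $\Im\rbra*{\lambda/\varPsi}$ is integrable over $\rbra*{0,\infty}$, and since $\frac{\sin\lambda x}{\abs*{x}}\to\pm\lambda$ as $x\to0\pm$ while $\abs*{\frac{\sin\lambda x}{\abs*{x}}}\le\lambda$, the function $\lambda\frac{\abs*{\omega}}{\abs*{\varPsi}^2}=\abs*{\Im\rbra*{\lambda/\varPsi}}$ serves as an integrable majorant. The dominated convergence theorem then yields $-\frac{1}{\pi}\int_0^\infty\rbra*{\pm\lambda}\frac{\omega}{\abs*{\varPsi}^2}\cd\lambda=\pm\frac{1}{\pi}\int_0^\infty\Im\rbra*{\lambda/\varPsi}\cd\lambda$, where I used $\Im\rbra*{\lambda/\varPsi}=-\lambda\omega/\abs*{\varPsi}^2$. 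Combining the two contributions gives the claimed value $c_\pm$.

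The main obstacle I anticipate is not a single hard estimate but the conceptual point of why the $\omega$-part survives at all: because $\omega\rbra*{\lambda}=o\rbra*{\lambda^2}$ by~\eqref{lim/2}, it contributes nothing under the rescaling that governs the $\theta$-part, yet it produces a nonzero linear-in-$x$ term through $\sin\lambda x\approx\lambda x$ together with the global integrability of $\Im\rbra*{\lambda/\varPsi}$ over $\rbra*{0,\infty}$. This is exactly why Assumption~\eqref{ass.Z} demands integrability over all of $\rbra*{0,\infty}$ and not merely near the origin as in Assumption~\ref{ass.T}, and why the $\omega$-part must be handled by a single global dominated-convergence argument rather than by the scaling used for the $\theta$-part; the remaining technical care lies in securing the uniform bound on $\frac{\theta\rbra*{\mu}/\mu^2}{\abs*{\varPsi\rbra*{\mu}}^2/\mu^4}$ for $\mu\ge r$, which is arranged by choosing $r$ large.
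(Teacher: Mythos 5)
Your proposal is correct and follows essentially the same route as the paper's proof: split the $\theta$-part at a large $r$, kill the small-$\lambda$ piece via Lemma~\ref{lem2.2}, rescale the large-$\lambda$ piece and apply dominated convergence using the bound $\Re\rbra*{\lambda^2/\varPsi\rbra*{\lambda}}\le 2a^{-1}$ for $\lambda\ge r$ (your ratio $\frac{\theta(\mu)/\mu^2}{\abs*{\varPsi(\mu)}^2/\mu^4}$ is exactly this quantity), while the $\omega$-part is handled by a single global dominated-convergence argument with majorant $\abs*{\Im\rbra*{\lambda/\varPsi\rbra*{\lambda}}}\in\mathcal{L}^1$, which is precisely what Assumption~\eqref{ass.Z} supplies. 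The only blemish is a harmless bookkeeping slip where the factor $\frac{1}{\pi}$ from the representation of $h$ appears on the right of your substitution identity but not on the left; the final values $\frac{1}{2a}$ (via $C_2=\frac12$) and $\pm\frac{1}{\pi}\int_0^\infty\Im\rbra*{\lambda/\varPsi\rbra*{\lambda}}\cd\lambda$ are nonetheless correct.
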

\begin{proof}
	Let $a>0$. Note that 
	\begin{align}
		\lim_{\lambda\rightarrow\infty} \Re\rbra*{\frac{\lambda^2}{\varPsi(\lambda)}} = a^{-1} 
	\end{align}
	holds. Hence, there exists $r>0$ such that 
	\begin{align}
		\frac{a^{-1}}{2} \le \Re\rbra*{\frac{\lambda^2}{\varPsi(\lambda)}} \le 2 a^{-1} \quad(\lambda\ge r).
	\end{align}
	If we can exchange the limit and the integral, then we have 
	\begin{align}
		\lim_{x\rightarrow0}\frac{1}{\abs*{x}}
		\int_r^\infty\rbra*{1-\cos\lambda x}\Re\rbra*{\frac{1}{\varPsi(\lambda)}} \cd \lambda
		= \lim_{x\rightarrow0} \int_{r\abs{x}}^\infty\frac{1-\cos\lambda}{\lambda^2}
		\Re\rbra*{\frac{\rbra*{\frac{\lambda}{\abs{x}}}^2}{\varPsi\rbra*{\frac{\lambda}{\abs{x}}}}} \cd \lambda
		= \frac{\pi}{2a} \label{limr}
	\end{align}
	and
	\begin{align}
		\lim_{x\rightarrow0\pm}\frac{1}{\abs*{x}}
		\int_0^\infty\sin\lambda x\Im\rbra*{\frac{1}{\varPsi(\lambda)}} \cd \lambda
		=\pm \int_0^\infty\Im\rbra*{\frac{\lambda}{\varPsi(\lambda)}} \cd\lambda.\label{limi}
	\end{align}
	To justify the exchange, note that the following hold:
	\begin{align}
		0\le\frac{1-\cos\lambda}{\lambda^2}
		\Re\rbra*{\frac{\rbra*{\frac{\lambda}{\abs{x}}}^2}{\varPsi\rbra*{\frac{\lambda}{\abs{x}}}}} 
		\cdot1_{\cbra*{\lambda\ge r\abs{x}}}
		\le 2a^{-1}\frac{1-\cos\lambda}{\lambda^2} \cdot1_{\cbra{\lambda>0}} 
		\in\mathcal{L}^1
	\end{align}
	and
	\begin{align}
		\abs*{\frac{\sin\lambda x}{\abs*{x}} \Im\rbra*{\frac{1}{\varPsi(\lambda)}}} 
		\le \abs*{\Im\rbra*{\frac{\lambda}{\varPsi(\lambda)}}}
		\in \mathcal{L}^1.
	\end{align}	
	Hence, by the dominated convergence theorem, we see that Eq.~\eqref{limr} and~\eqref{limi} hold.
	By Lemma~\ref{lem2.2}, we have
	\begin{align}
		\frac{1}{\abs*{x}}
		\int_0^r\abs*{\rbra*{1-\cos\lambda x}\Re\rbra*{\frac{1}{\varPsi(\lambda)}}} \cd \lambda
		\le \abs{x} \int_0^r \frac{\lambda^2}{\abs{\varPsi(\lambda)}}\cd \lambda
		\xrightarrow[x\downarrow0]{} 0.
	\end{align}
	Consequently, we conclude that
	\begin{align}
		\lim_{x\rightarrow0\pm} \frac{1}{\abs*{x}}h(x) 
		= \frac{1}{2a}\pm\frac{1}{\pi}\int_0^\infty \Im\rbra*{\frac{\lambda}{\varPsi(\lambda)}}\cd\lambda.
	\end{align}
\end{proof}
The same result as Theorem~\ref{thm3.1} holds under Assumption~\eqref{ass.Z}, 
which can be proved by mimicking the proof of Theorem~\ref{thm5.1}.

\section{Appendix: The asymptotic behavior of the renormalized zero resolvent at $\infty$} \label{sec6}
We consider the asymptotic behavior of the renormalized zero resolvent at $\infty$ under the regular varying condition on 
the exponents $\theta$ and $\omega$ near $0$. As the proofs are quite similar to those of Theorems~\ref{thm1.2} and~\ref{thm1.3}, we state several results without proofs. 
\subsection{The asymptotic behavior of the exponents $\theta$ and $\omega$ near $0$}
In this section, we will consider the following assumption:
\begin{enumerate} [label = \textbf{(B)}]
\item \label{ass.v}
	Suppose that $\nu(\cd x) = \xi(x)\cd x$ with $\xi_\pm(x)\coloneqq\xi(\pm x)$ for $x\ge0$ and that
	\begin{align}
		\xi_\pm(x) \sim x^{-\beta-1}K_\pm(x)\quad(x\rightarrow\infty),\notag
	\end{align} 
	where $K_\pm$ are slowly varying functions at $\infty$ and $\beta$ is a constant. 
\end{enumerate}
Note that if~\ref{ass.v} holds, then $\beta\ge 0$ holds by the assumption $\int_\mathbb{R}\rbra{x^2\land1}\,\nu(\cd x)<\infty$.
\begin{Thm}\label{thm6.1}
	The following assertions hold.
	\begin{enumerate}
		\item \label{thm6.1.i}
			If Assumption~\ref{ass.v} holds with $\beta\in(0,2)$, 
			then we have
			\begin{align}
				\theta(\lambda) \sim c_\theta^0 \lambda^\beta L(\lambda) \quad (\lambda\rightarrow 0+),
			\end{align}
			where $c_\theta^0 = \pi C_{\beta+1}$ and the slowly varying function $L$ at $0$ 
			is given as 
			\begin{align}
				L(\lambda)\coloneqq K_+\rbra*{\lambda^{-1}}+K_-\rbra*{\lambda^{-1}}. \label{eq.6.1.L}
			\end{align} 
			Consequently, Eq.~\eqref{int.ass.T} holds.
		\item \label{thm6.1.ii}
			If $\int_{\mathbb{R}} x^2\,\nu(\cd x)<\infty$, then we have
			\begin{align}
				\theta(\lambda) \sim c_\theta^2 \lambda^2 \quad (\lambda\rightarrow 0+), 
				\label{thm6.1.sim.ii}
			\end{align}
			where $c_\theta^2 = a + \frac{1}{2}\int_{\mathbb{R}} x^2\, \nu(\cd x)$. 
		\item \label{thm6.1.iii}
			If Assumption~\ref{ass.v} holds with $\beta\in(0,1)$ 
			and if $k_0\coloneqq \lim_{x\rightarrow\infty} \frac{K_-(x)}{K_+(x)}\in[0,\infty]$ holds, then we have
			\begin{align}
				\omega(\lambda) \sim c_\omega^0 \lambda^\beta L(\lambda) \quad (\lambda\rightarrow 0+),
			\end{align}
			where $c_\omega^0 = -\frac{1-k_0}{1+k_0}\pi C_{\beta+1}\tan\frac{\pi\beta}{2}$ and 
			the slowly varying function $L$ at $0$ is given as Eq.~\eqref{eq.6.1.L}. 
		\item \label{thm6.1.iv}
			If $\int_{(-1,1)^c}\abs{x}\,\nu(\cd x)<\infty$, then we have
			\begin{align}
				\omega(\lambda)\sim c_\omega^1 \lambda \quad (\lambda\rightarrow 0+), \label{thm6.1.sim.i}
			\end{align}
			where $c_\omega^1=b-\int_{(-1,1)^c} x\,\nu(\cd x)$. 
			In the degenerate case $b=\int_{(-1,1)^c} x\,\nu(\cd x)$ (i.e., $c_\omega^1=0$), 
			if Assumption~\ref{ass.v} holds for some $1<\beta<3$, and if $k_0\coloneqq \lim_{x\rightarrow\infty} \frac{K_-(x)}{K_+(x)}\in[0,\infty]$ 
			holds, then it holds that
			\begin{align}
				\omega(\lambda)\sim c_\omega^3 \lambda^\beta L(\lambda) \quad (\lambda\rightarrow 0+), 
			\end{align}
			where $c_\omega^3= \frac{1-k_0}{1+k_0}\cdot\frac{\pi C_\beta}{\beta}$ and 
			the slowly varying function $L$ at $0$ is given as Eq.~\eqref{eq.6.1.L}. 
	\end{enumerate}
\end{Thm}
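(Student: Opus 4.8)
The plan is to mirror the proof of Theorem~\ref{thm1.3}, interchanging the roles of $0$ and $\infty$: since we now let $\lambda\to0+$, the dominant contribution to both $\theta$ and $\omega$ comes from the \emph{tail} of the L\'evy measure. For each assertion I would split the defining integral over $\mathbb{R}$ into a bounded part $\cbra*{\abs*{x}\le r}$ and a tail part $\cbra*{\abs*{x}>r}$, show that the bounded part (together with the Gaussian and drift terms) is negligible relative to the target order $\lambda^\beta L\rbra*{\lambda}$, and evaluate the tail part by the substitution $u=\lambda x$.

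For parts~\ref{thm6.1.i} and~\ref{thm6.1.iii} I would argue as follows. In~\ref{thm6.1.i}, the Gaussian term $a\lambda^2$ is $o\rbra*{\lambda^\beta}$ since $\beta<2$, and the bounded part is $O\rbra*{\lambda^2}$ by $1-\cos\lambda x\le\rbra*{\lambda x}^2/2$ and $\int_{\cbra*{\abs*{x}\le r}}x^2\,\nu\rbra*{\cd x}<\infty$. After the substitution $u=\lambda x$ the normalized tail integral becomes
\begin{align}
	\int_{\lambda r}^\infty \frac{1-\cos u}{u^{\beta+1}}\cdot
	\frac{\xi_\pm\rbra*{u/\lambda}}{\rbra*{u/\lambda}^{-\beta-1}K_\pm\rbra*{u/\lambda}}\cdot
	\frac{K_\pm\rbra*{u/\lambda}}{K_\pm\rbra*{\lambda^{-1}}}\,\cd u, \notag
\end{align}
exactly as in the proof of Theorem~\ref{thm1.3}, and passing to the limit yields $\int_0^\infty\frac{1-\cos u}{u^{\beta+1}}\,\cd u=\pi C_{\beta+1}$ by Eq.~\eqref{Calpha}. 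For~\ref{thm6.1.iii}, taking $r\ge1$ so that the compensator $\lambda x\cdot1_{\cbra*{\abs*{x}<1}}$ vanishes on the tail, the drift term $b\lambda$ and the bounded part are both $o\rbra*{\lambda^\beta}$ because $\lambda=o\rbra*{\lambda^\beta}$ when $\beta<1$; the same substitution applied to $-\sin\lambda x$ gives the limit $-\int_0^\infty\frac{\sin u}{u^{\beta+1}}\,\cd u=-\pi C_{\beta+1}\tan\frac{\pi\beta}{2}$, using the sine formula from the proof of Theorem~\ref{thm1.2}. In both cases, combining the $\pm$ contributions---whose signs agree for $\theta$ but are opposite for $\omega$---with the factor $\frac{1-k_0}{1+k_0}$ produces the stated constants.

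Parts~\ref{thm6.1.ii} and~\ref{thm6.1.iv} are more direct. For~\ref{thm6.1.ii}, dominated convergence applies immediately: $\frac{1-\cos\lambda x}{\lambda^2}\to\frac{x^2}{2}$ pointwise with dominating function $\frac{x^2}{2}\in\mathcal{L}^1(\nu)$. For the first claim of~\ref{thm6.1.iv}, when $\int_{\rbra*{-1,1}^c}\abs*{x}\,\nu\rbra*{\cd x}<\infty$ I would rewrite
\begin{align}
	\omega\rbra*{\lambda}=\rbra*{b-\int_{\rbra*{-1,1}^c}x\,\nu\rbra*{\cd x}}\lambda
	+\int_\mathbb{R}\rbra*{\lambda x-\sin\lambda x}\,\nu\rbra*{\cd x}, \notag
\end{align}
divide by $\lambda$, and apply dominated convergence (with dominating function $2\abs*{x}\cdot1_{\cbra*{\abs*{x}\ge1}}+\frac{x^2}{6}\cdot1_{\cbra*{\abs*{x}<1}}$) to obtain $\omega\rbra*{\lambda}/\lambda\to c_\omega^1$. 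In the degenerate case $c_\omega^1=0$ the linear term disappears, leaving $\omega\rbra*{\lambda}=\int_\mathbb{R}\rbra*{\lambda x-\sin\lambda x}\,\nu\rbra*{\cd x}$; the substitution $u=\lambda x$ then gives the tail limit $\int_0^\infty\frac{u-\sin u}{u^{\beta+1}}\,\cd u=\frac{\pi C_\beta}{\beta}$ by Eq.~\eqref{x-sin.C_}, and the $\pm$ sign structure yields $c_\omega^3$.

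I expect the main obstacle to be, as in Theorem~\ref{thm1.3}, the uniform justification of the exchange of limit and integral after the substitution: one must construct $\lambda$-independent $\mathcal{L}^1$ dominating functions, which is precisely where Potter's theorem (\cite[Theorem 1.5.6]{RegVar}) enters to control the slowly varying ratio $K_\pm\rbra*{u/\lambda}/K_\pm\rbra*{\lambda^{-1}}$. A secondary subtlety, confined to the degenerate case of~\ref{thm6.1.iv}, is verifying that the rewriting eliminating the linear term is legitimate (which needs $\int_{\rbra*{-1,1}^c}\abs*{x}\,\nu\rbra*{\cd x}<\infty$, guaranteed by $\beta>1$) and that the bounded part $\int_{\cbra*{\abs*{x}<1}}\rbra*{\lambda x-\sin\lambda x}\,\nu\rbra*{\cd x}=O\rbra*{\lambda^3}$ is indeed $o\rbra*{\lambda^\beta}$ for $\beta<3$.
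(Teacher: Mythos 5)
Your plan is exactly the one the paper intends: Theorem~\ref{thm6.1} is stated without proof precisely because the argument is that of Theorem~\ref{thm1.3} with the roles of $0$ and $\infty$ interchanged, and your decomposition (bounded part plus tail, substitution $u=\lambda x$, Potter-theorem domination of the ratio $K_\pm\rbra*{u/\lambda}/K_\pm\rbra*{\lambda^{-1}}$, valid since $u/\lambda\ge r$ on the substituted domain) is that proof. Your constants check out against Eq.~\eqref{Calpha}, against the sine formula of Section~\ref{sec3} applied with exponent $\beta+1\in(1,2)$, and against Eq.~\eqref{x-sin.C_}; the sign bookkeeping (contributions from the two half-lines add for $\theta$ and subtract for $\omega$, producing the factor $\frac{1-k_0}{1+k_0}$, interpreted as $-1$ when $k_0=\infty$, with $L=K_++K_-$) is right; and your treatment of part~\ref{thm6.1.ii} and of the nondegenerate half of part~\ref{thm6.1.iv} by direct dominated convergence, including the rewriting of the compensator, which is legitimate exactly under $\int_{(-1,1)^c}\abs*{x}\,\nu\rbra*{\cd x}<\infty$, is correct.

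One genuine omission: assertion~\ref{thm6.1.i} also claims ``Consequently, Eq.~\eqref{int.ass.T} holds,'' and your proposal never proves this. The argument is short but is part of the statement: $\abs*{\Im\rbra*{\lambda/\varPsi\rbra*{\lambda}}}\le\lambda/\abs*{\varPsi\rbra*{\lambda}}\le\lambda/\theta\rbra*{\lambda}$, and since $\theta\rbra*{\lambda}\sim\pi C_{\beta+1}\lambda^{\beta}L\rbra*{\lambda}$ with $\beta<2$, a Potter-type lower bound $L\rbra*{\lambda}\ge c\,\lambda^{\delta}$ for small $\lambda$ and any fixed $\delta\in\rbra*{0,2-\beta}$ yields $\int_0^1\lambda/\theta\rbra*{\lambda}\,\cd\lambda<\infty$; this is the computation already carried out in Example~\ref{eg1.4}. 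A secondary bookkeeping point you should make explicit, since you flag the domination step as the main obstacle: the admissible Potter exponent here is not $\rbra*{\alpha-1}\land\rbra*{2-\alpha}$ as in Theorem~\ref{thm1.3} but varies by part --- you need $\delta<\beta\land\rbra*{2-\beta}$ for the $\rbra*{1-\cos u}$ integral in~\ref{thm6.1.i}, $\delta<\beta\land\rbra*{1-\beta}$ for the $\sin u$ integral in~\ref{thm6.1.iii}, and $\delta<\rbra*{\beta-1}\land\rbra*{3-\beta}$ for the $\rbra*{u-\sin u}$ integral in the degenerate case of~\ref{thm6.1.iv}; the hypotheses $\beta\in\rbra*{0,2}$, $\beta\in\rbra*{0,1}$ and $\beta\in\rbra*{1,3}$ are exactly what make these ranges nonempty and the dominating functions integrable at both endpoints. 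With these two additions your proof is complete.
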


\subsection{ The asymptotic behavior of the renormalized zero resolvent at $\infty$}

\begin{Thm}\label{thm6.3}
	Suppose that Assumption~\ref{ass.A} holds. Suppose, in addition, that there exist a slowly varying function $L$ at $0$ and  
	constants $c_\theta^0>0$, $c_\omega^0\in\mathbb{R}$ and $\beta\in(1,2)$ such that 
	\begin{align}
		\theta(\lambda)\sim c_\theta^0 \lambda^\beta L(\lambda) \quad \text{and} \quad 
		\omega(\lambda)\sim c_\omega^0 \lambda^\beta L(\lambda) \quad(\lambda\rightarrow0+).\label{thm6.3.sim.ass}
	\end{align}
	Then Assumption~\ref{ass.T} holds and 
	\begin{align}
		h(x) \sim c_\pm^0 \abs{x}^{\beta-1} L\rbra*{\frac{1}{\abs{x}}}^{-1}\quad(x\rightarrow \pm\infty),
		\label{thm6.1.sim.h}
	\end{align}
	where the coefficients $c_+^0$ and $c_-^0$ are given as 
	\begin{align}
		c_\pm^0 = \frac{C_\beta}{{c_\theta^0}^2+{c_\omega^0}^2} \rbra*{c_\theta^0 \pm c_\omega^0 \cot\frac{\pi\beta}{2}}. \label{c_+-^0}
	\end{align}
\end{Thm}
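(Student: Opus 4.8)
The plan is to follow the proof of Theorem~\ref{thm1.2}, but with the roles of the small- and large-$\lambda$ regions interchanged: here regular variation is imposed at $0$ and the asymptotics are sought as $x\to\pm\infty$, so after rescaling it is the neighbourhood of $\lambda=0$ that produces the leading term. First I would verify that Assumption~\ref{ass.T} holds, which legitimizes the representation~\eqref{eq.h}. Since Assumption~\ref{ass.A} is assumed, only~\eqref{int.ass.T} needs checking. Writing $\Im(\lambda/\varPsi(\lambda))=-\lambda\,\omega(\lambda)/\abs{\varPsi(\lambda)}^2$ and using $\abs{\varPsi(\lambda)}^2\ge\theta(\lambda)^2$ together with~\eqref{thm6.3.sim.ass}, one finds $\abs{\Im(\lambda/\varPsi(\lambda))}=O\bigl(\lambda^{1-\beta}L(\lambda)^{-1}\bigr)$ as $\lambda\to0+$; as $\beta\in(1,2)$ makes the index $1-\beta$ exceed $-1$, the standard integrability criterion for regularly varying functions gives $\int_0^1\abs{\Im(\lambda/\varPsi(\lambda))}\cd\lambda<\infty$, so Assumption~\ref{ass.T} is in force.

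Starting from
\[
h(x)=\frac1\pi\int_0^\infty(1-\cos\lambda x)\frac{\theta(\lambda)}{\abs{\varPsi(\lambda)}^2}\cd\lambda-\frac1\pi\int_0^\infty\sin\lambda x\,\frac{\omega(\lambda)}{\abs{\varPsi(\lambda)}^2}\cd\lambda,
\]
I would fix a small $r\in(0,1)$ and split each integral at $r$. For the tails $\int_r^\infty$ I bound $\abs{1-\cos\lambda x}\le2$, $\abs{\sin\lambda x}\le1$ and $\theta,\abs{\omega}\le\abs{\varPsi}$, reducing them to a multiple of $\int_r^\infty\abs{\varPsi(\lambda)}^{-1}\cd\lambda$, which is finite by Lemma~\ref{lem2.2} and the continuity and non-vanishing of $\varPsi$ on $[r,\infty)$. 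Multiplying by $L(\abs{x}^{-1})/\abs{x}^{\beta-1}$, which tends to $0$ as $\abs{x}\to\infty$ because $\beta-1>0$ and $L$ is slowly varying, shows that these tail contributions vanish in the limit.

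For the main parts $\int_0^r$ I perform the substitution $\lambda=\mu/\abs{x}$, which (since $\cos$ is even and $\sin(\mu\operatorname{sgn}x)=\operatorname{sgn}(x)\sin\mu$) turns the cosine part into
\begin{align}
\frac{L(\abs{x}^{-1})}{\abs{x}^{\beta-1}}\cdot\frac1\pi\int_0^r(1-\cos\lambda x)\frac{\theta(\lambda)}{\abs{\varPsi(\lambda)}^2}\cd\lambda
&=\frac1\pi\int_0^{r\abs{x}}\frac{1-\cos\mu}{\mu^\beta}\,\frac{\theta(\mu/\abs{x})}{\abs{\varPsi(\mu/\abs{x})}^2}\Bigl(\tfrac{\mu}{\abs{x}}\Bigr)^\beta L\Bigl(\tfrac{\mu}{\abs{x}}\Bigr)\frac{L(\abs{x}^{-1})}{L(\mu/\abs{x})}\cd\mu \notag
\end{align}
and the sine part into the same expression with $1-\cos\mu$ replaced by $\sin\mu$, $\theta$ by $\omega$, and an overall factor $-\operatorname{sgn}(x)$. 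By~\eqref{thm6.3.sim.ass} the factor $\frac{\theta(\mu/\abs{x})}{\abs{\varPsi(\mu/\abs{x})}^2}(\mu/\abs{x})^\beta L(\mu/\abs{x})\to\frac{c_\theta^0}{(c_\theta^0)^2+(c_\omega^0)^2}$ (with the analogous limit $\frac{c_\omega^0}{(c_\theta^0)^2+(c_\omega^0)^2}$ for $\omega$), while $L(\abs{x}^{-1})/L(\mu/\abs{x})\to1$ by slow variation at $0$. Passing to the limit by dominated convergence and using $\int_0^\infty\frac{1-\cos\mu}{\mu^\beta}\cd\mu=\pi C_\beta$ from~\eqref{Calpha} together with $\int_0^\infty\frac{\sin\mu}{\mu^\beta}\cd\mu=\pi C_\beta\tan\frac{\pi(\beta-1)}{2}$ (obtained from~\eqref{Calpha} by integration by parts) and $\tan\frac{\pi(\beta-1)}{2}=-\cot\frac{\pi\beta}{2}$, the two parts combine to the coefficient $c_\pm^0$ of~\eqref{c_+-^0}.

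The step requiring the most care is the domination for the dominated convergence theorem in these rescaled integrals, and it is the main obstacle. I would apply Potter's theorem for $L$ slowly varying at $0$: for a fixed $\delta\in(0,(\beta-1)\wedge(2-\beta))$ there is a threshold below which $L(\abs{x}^{-1})/L(\mu/\abs{x})\le2\max\{\mu^{\delta},\mu^{-\delta}\}$ (the ratio of arguments being $1/\mu$), while~\eqref{thm6.3.sim.ass} bounds $\frac{\theta(\mu/\abs{x})}{\abs{\varPsi(\mu/\abs{x})}^2}(\mu/\abs{x})^\beta L(\mu/\abs{x})$ uniformly once $\mu/\abs{x}$ is small. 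This yields integrable majorants $\frac{1-\cos\mu}{\mu^{\beta+\delta}}1_{(0,1]}+\mu^{-(\beta-\delta)}1_{(1,\infty)}$ and $\frac{\abs{\sin\mu}}{\mu^{\beta+\delta}}1_{(0,1]}+\mu^{-(\beta-\delta)}1_{(1,\infty)}$, where $\delta<(\beta-1)\wedge(2-\beta)$ ensures integrability at both ends, the bound $2-\beta$ being dictated by the sine term near $0$ and $\beta-1$ by the tails. Apart from this domination, the verification that $r$ can be chosen small enough, and the trigonometric bookkeeping, every step is a verbatim mirror of the proof of Theorem~\ref{thm1.2}.
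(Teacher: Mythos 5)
Your proposal is correct and is essentially the paper's own argument: the paper states Theorem~\ref{thm6.3} without proof precisely because it is the mirror image of the proof of Theorem~\ref{thm1.2}, and your write-up executes exactly that mirroring — deducing~\eqref{int.ass.T} from $\abs*{\Im(\lambda/\varPsi(\lambda))}=O\rbra*{\lambda^{1-\beta}L(\lambda)^{-1}}$ with $1-\beta>-1$, killing the tail $\int_r^\infty$ via Lemma~\ref{lem2.2} against the normalization $\abs{x}^{-(\beta-1)}L(\abs{x}^{-1})\to0$, rescaling $\lambda=\mu/\abs{x}$, and dominating via Potter bounds with $\delta\in\rbra*{0,(\beta-1)\land(2-\beta)}$. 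The sign bookkeeping, including $\tan\frac{\pi(\beta-1)}{2}=-\cot\frac{\pi\beta}{2}$, correctly yields the coefficients~\eqref{c_+-^0}, so no gaps remain.
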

By~\ref{thm6.1.i} and~\ref{thm6.1.iv} of Theorem~\ref{thm6.1}, the following is a sufficient condition for the regular 
variation conditions~\eqref{thm6.3.sim.ass} of the exponents $\theta$ and $\omega$: 
Assumption~\ref{ass.v} holds with $\beta\in(1,2)$ as well as 
\begin{align}
	\omega(\lambda)=\int_\mathbb{R}\rbra*{\lambda x - \sin \lambda x}\,\nu(\cd x) \quad (\lambda \in\mathbb{R})\quad
	\text{and}\quad\lim_{x\rightarrow\infty} \frac{K_-(x)}{K_+(x)}\in[0,\infty].
\end{align}

\bibliographystyle{plain}

\begin{thebibliography}{99}


	\bibitem{Bertoin}
	J. Bertoin.
	\newblock L\'{e}vy Processes, volume 121 of Cambridge Tracts in Mathematics. 
	\newblock Cambridge University Press, Cambridge, 1996.


	\bibitem{RegVar}
	N. H. Bingham, C. M. Goldie, J. L. Teugels.
	\newblock Regular Variation.
	\newblock Cambridge University Press, Cambridge, 1987. MR-0898871.



	\bibitem{GLM}
	T. Grzywny, \L. Le\.{z}aj and M. Mi\'{s}ta.
	\newblock Hitting probabilities for L\'{e}vy processes on the real line.
	\newblock ALEA, Lat. Am. J. Probab. Math. Stat. 18, 727-760 (2021).


	\bibitem{Pan}
	H. Pant\'{i}.
	\newblock On L\'{e}vy processes conditioned to avoid zero.
	\newblock ALEA, Lat. An. J.Probab. Math. Stat. 14:657-690, 2017.


	\bibitem{R-V-Y1}
	B. Roynette, P. Vallois, and M. Yor.
	\newblock Limiting laws associated with Brownian motion perturbed by normalized exponential weights. I.
	\newblock Studia Scientiarum Mathematicarum Hungarica, 43(2):171-246, 2006.
	
	
	\bibitem{R-V-Y2}
	B. Roynette, P. Vallois, and M. Yor.
	\newblock Limiting laws associated with Brownian motion perturbed by its maximum and local time. II.
	\newblock Studia Scientiarum Mathematicarum Hungarica, 43(3):295-360, 2006.
	
	
	\bibitem{R-V-Y3}
	B. Roynette, P. Vallois, and M. Yor.
	\newblock Some penalisations of the Wiener measure.
	\newblock Jpn. J. Math., 1:264-290, 2006.


	\bibitem{S-Y}
	P. Salminen and M. Yor.
	\newblock Tanaka formula for symmetric L\'{e}vy processes.
	\newblock $S\'{e}minaire\,\,de\,\,Probabilit\'{e}s\,\,{XL}$, Lecture Notes in Mathematics, vol 1899, pages 265-285.
	Springer, Berlin, 2007.


	\bibitem{T-Y2023}
	S. Takeda and K. Yano.
	\newblock Local time penalizations with various clocks for L\'{e}vy processes. 
	\newblock $Electron.\,\,J. \,\,Probab$. 28 1 - 35, 2023.


	\bibitem{Tsukada2018}
	H. Tsukada.
	\newblock A potential theoretic approach to Tanaka formula for asymmetric L\'{e}vy processes.
	\newblock $S\'{e}minaire\,\,de\,\,Probabilit\'{e}s\,\,{XLIX}$, Lecture Notes in Mathematics, vol 2215. 521-542.
	Springer, Cham, 2018.


	\bibitem{Y1}
	K. Yano.
	\newblock Excursions away from a regular point for one-dimensional symmetric L\'{e}vy processes without Gaussian part.
	\newblock Potential Analysis, 32:305-341, 2010.
	
	
	\bibitem{Y2}
	K. Yano.
	\newblock On harmonic function for the killed process upon hitting zero of asymmetric L\'{e}vy provesses.
	\newblock Journal of Math-for-Industry, 5(A):17-24, 2013.	
	
	
	\bibitem{Y3}
	K. Yano, Y. Yano and M. Yor.
	\newblock On the laws of first hitting times of points for one-dimensional symmetric stable L\'{e}vy processes.
	\newblock  S\'eminaire de Probabilit\'es XLII, 187--227, Lecture Notes in Math., 1979, Springer, Berlin, 2009.


	\bibitem{Y4}
	K. Yano, Y. Yano and M. Yor.
	\newblock Penalising symmetric stable L\'{e}vy paths.
	\newblock J. Math Soc. Japan, 61(3):757-798, 2009.
	
\end{thebibliography}

\end{document}